\numberwithin{equation}{section}
\theoremstyle{plain}
\newtheorem{theorem}{Theorem}[section]
\newtheorem{lemma}[theorem]{Lemma}
\newtheorem{proposition}[theorem]{Proposition}
\theoremstyle{definition}
\newtheorem{definition}[theorem]{Definition}
\theoremstyle{remark}
\newtheorem{remark}[theorem]{Remark}
\renewcommand{\Re}{\operatorname{Re}}
\renewcommand{\Im}{\operatorname{Im}}
\newcommand{\supp}{\operatorname{supp}}
\newcommand{\sym}{\operatorname{sym}}
\newcommand{\GL}{\operatorname{GL}}
\newcommand{\SL}{\operatorname{SL}}
\newcommand{\dd}{\mathrm{d}}
\def\@tocline#1#2#3#4#5#6#7{\relax
  \ifnum #1>\c@tocdepth 
  \else
    \par \addpenalty\@secpenalty\addvspace{#2}%
    \begingroup \hyphenpenalty\@M
    \@ifempty{#4}{%
      \@tempdima\csname r@tocindent\number#1\endcsname\relax
    }{%
      \@tempdima#4\relax
    }%
    \parindent\z@ \leftskip#3\relax \advance\leftskip\@tempdima\relax
    \rightskip\@pnumwidth plus4em \parfillskip-\@pnumwidth
    #5\leavevmode\hskip-\@tempdima
      \ifcase #1
       \or\or \hskip 1em \or \hskip 2em \else \hskip 3em \fi%
      #6\nobreak\relax
    \hfill\hbox to\@pnumwidth{\@tocpagenum{#7}}\par
    \nobreak
    \endgroup
  \fi}
\begin{document}

\title[Effective decorrelation of Hecke eigenforms]
{Effective decorrelation of Hecke eigenforms}
\author{Bingrong Huang}
\address{Data Science Institute and School of Mathematics \\ Shandong University \\ Jinan \\ Shandong 250100 \\China}
\email{brhuang@sdu.edu.cn}


\date{\today}

\begin{abstract}
  In this paper, we prove effective quantitative decorrelation of values of two Hecke eigenforms as the weight goes to infinity.
  As consequences, we get an effective version of equidistribution of mass
  and
  zeros of certain linear combinations of  Hecke eigenforms.
\end{abstract}

\keywords{Effective decorrelation, quantum unique ergodicity, zero equidistribution, Hecke eigenform.}

\subjclass[2010]{11F11, 11F67, 58J51, 81Q50}

\thanks{This work was supported by  the National Key R\&D Program of China (No. 2021YFA1000700) and NSFC (No. 12031008).}

\maketitle

\section{Introduction} \label{sec:Intr}

A central problem in the area of quantum chaos is to understand the value distribution of eigenfunctions on a Riemann surface $\mathbb{X}$ in the semi-classical limit.
Let $\phi$ denote a $L^2$-normalized eigenfunction. It is very interesting to understand the fluctuations of the probability measures $ |\phi(x)|^2 \dd x$.
The quantum ergodicity theorem (QE) proved by Shnirelman \cite{Shnirelman1974}, Colin de Verdi\`{e}re \cite{CdV1985}, and Zelditch \cite{Zelditch1987} for compact surfaces, and extended by Zelditch \cite{Zelditch1992} to noncompact surfaces such as modular surfaces,
implies that for a hyperbolic surface there exists a full density subsequence $\phi$ such that $\dd x$ is a weak* limit measure of $|\phi(x)|^2 \dd x$.

A typical example is the case when $\mathbb X=\Gamma\backslash \mathbb{H}$ and $\phi$ is a Hecke--Maass cusp form.
Here $\mathbb{H}=\{z=x+iy: x\in\mathbb{R}, \; y>0\}$ is the upper half plane and $\Gamma=\SL(2,\mathbb{Z})$ is the modular group.
Let $\dd \mu(z)=\frac{\dd x \dd y}{y^2}$ be hyperbolic measure.
The quantum unique ergodicity (QUE) conjectured by Rudnick--Sarnak \cite{RudnickSarnak1994} and proved by
Lindenstrauss \cite{Lindenstrauss2006} and Soundararajan  \cite{Soundararajan2010quantum} asserts that there are no exceptional subsequences, i.e., $|\phi(z)|^2 \dd\mu(z) \rightarrow \frac{3}{\pi}\dd\mu(z)$ as the eigenvalue of $\phi$ tends to infinity.
QUE (for Laplacians) concerns the diagonal matrix elements $\langle A \phi_j,\phi_j\rangle$ of pseudodifferential operators $A$ relative to an orthonormal basis $\{\phi_j\}$ of eigenfunctions of the Laplacian $\Delta=-y^2 (\partial^2 /\partial x^2+\partial^2 /\partial y^2)$ of the modular surface $\mathbb{X}$.

In \cite{Zelditch2004}, Zelditch observed the asymptotic vanishing of near off-diagonal matrix elements $\langle A \phi_i,\phi_j\rangle \, ( i\neq j)$ for eigenfunctions of QUE systems. Here ``near'' means the eigenvalue gaps tend to zero. In the modular surface case, we expect that much stronger results hold. For example, one may conjecture that all off-diagonal matrix elements tend to zero as the maximum of the eigenvalues tend to infinity. 

Holowinsky and Soundararajan \cite{Holowinsky,HS,Soundararajan2010weak} proved a holomorphic analog of the QUE conjecture, i.e., mass equidistribution.
Based on ideas in Iwaniec \cite{Iwaniec}, Lester, Matom\"aki, and Radziwi{\l\l} \cite{LMR} proved an effective version of quantitative equidistribution of mass  (EQQUE) for holomorphic Hecke eigenforms.
While  working on this project, we found Constantinescu \cite{Constantinescu} extended Holowinsky and Soundararajan's method to the decorrelation of two holomorphic Hecke eigenforms with almost equal weights.

In this paper, we extend the methods of Holowinsky and Soundararajan \cite{HS} and of Lester, Matom\"aki, and  Radziwi{\l\l} \cite{LMR} to prove an effective decorrelation of values of two Hecke  eigenforms, which can be viewed as an effective version of  a holomorphic  analog of Zelditch's result on near off-diagonal matrix elements.
As applications, we prove effective mass equidistribution of linear combinations of Hecke eigenforms ($J$-near Hecke eigenforms as in Definition \ref{def}), and equidistribution of their zeros.
In particular, we allow the number of Hecke eigenforms $J$ to grow slowly (a small power of the logarithm of the weight) as the weight tends to infinity.

\subsection{Effective decorrelation of values of Hecke eigenforms}

Let $\mathbb{H}=\{z=x+iy: x\in\mathbb{R}, \; y>0\}$ be the upper half plane and $\Gamma=\SL(2,\mathbb{Z})$ be the modular group.
Let $\mathbb X=\Gamma\backslash \mathbb{H}$ be the modular surface.
For an even integer $k\geq12$, let $S_k$ be the Hilbert space of all weight $k$ holomorphic cusp forms of level $1$. Let $H_k$ be a Hecke orthonormal basis of $S_k$. For $f,g\in H_k$, let $F=y^{k/2} f$ and $G=y^{k/2} g$. We have
\begin{equation}\label{eqn:innerproduct}
  \langle F,G\rangle := \int_{\mathbb{X}} F(z) \overline{G(z)} \frac{\dd x\dd y}{y^2}
  = \left\{
  \begin{array}{ll}
    1, & \textrm{if $f=g$}, \\
    0, & \textrm{if $f\neq g$.}
  \end{array} \right.
\end{equation}
Let $\mathcal{F}=\{z=x+iy: |x|\leq 1/2, \; |z|\geq1 \}$ be the standard fundamental domain of $\Gamma\backslash \mathbb{H}$.
Our main result in this paper is as follows.

\begin{theorem}\label{thm:decor}
  Let $k$ be a large even integer and  $1\leq M \leq \log k$.
  Let $f,g\in H_k$  and  $f\neq g$.
  Let $\psi\in \mathcal{C}_c^\infty(\mathbb X)$ satisfy
  \begin{equation}\label{eqn:psi_cond}
     y^{a+b} \frac{\partial^a}{\partial x^a} \frac{\partial^b}{\partial y^b}
     \psi(z) \ll_{a,b} M^{a+b} , \quad \textrm{for any $a,b\in \mathbb{Z}_{\geq0}$ and $z\in \mathcal{F}$}.
  \end{equation}
  Assume that $\psi|_{\mathcal{F}}$ has support contained in the interior of $\mathcal{F}$ and $\psi(x+iy)=0$ if $y\geq BM$ for some absolute constant $B>1$.
  Then we  have
  \[
    \langle \psi  F, G \rangle \ll_{\varepsilon} M^{5/3} (\log k)^{-\delta+\varepsilon},
  \]
  for  $\delta = 1.19\times 10^{-41}$.
\end{theorem}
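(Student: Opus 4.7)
The plan is to spectrally decompose the test function $\psi$, reduce the problem to bounding the triple products $\langle u_j F, G\rangle$ and $\langle E_t F, G\rangle$ uniformly in the spectral parameter, and then apply the Holowinsky--Soundararajan dichotomy adapted to the decorrelation setting. Concretely, via the Roelcke--Selberg spectral formula,
\[
\psi = \frac{\langle\psi,1\rangle}{\vol(\mathbb{X})} + \sum_{j\geq 1}\langle\psi, u_j\rangle\, u_j + \frac{1}{4\pi}\int_{-\infty}^{\infty}\langle\psi, E_t\rangle\, E(\cdot,\tfrac{1}{2}+it)\,dt,
\]
where $\{u_j\}$ is a Hecke orthonormal basis of Maass cusp forms and $E_t=E(\cdot,\tfrac{1}{2}+it)$. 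Substituting into $\langle\psi F, G\rangle$, the constant term contributes $\frac{\langle\psi,1\rangle}{\vol(\mathbb{X})}\langle F, G\rangle = 0$ by \eqref{eqn:innerproduct}, leaving
\[
\langle\psi F, G\rangle = \sum_{j\geq 1}\langle\psi, u_j\rangle\,\langle u_j F, G\rangle + \frac{1}{4\pi}\int_{\mathbb{R}}\langle\psi, E_t\rangle\,\langle E_t F, G\rangle\, dt.
\]

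Next, I would exploit the hypothesis $y^{a+b}\partial_x^a\partial_y^b\psi\ll_{a,b}M^{a+b}$ together with $\supp\psi\subset\{y\leq BM\}$ to show, via repeated integration by parts using $\Delta u_j=(\tfrac14+t_j^2)u_j$ and unfolding against the Whittaker expansion, the rapid decay
\[
|\langle\psi, u_j\rangle|,\ |\langle\psi, E_t\rangle|\ \ll_{A,\varepsilon}\ M^{\varepsilon}\bigl(1+|t|/M\bigr)^{-A}
\]
for any $A\geq 0$. This truncates the spectral expansion effectively to $|t_j|\leq M^{1+\varepsilon}$; combined with Weyl's law ($\sim M^2$ Maass forms in this range), a large-sieve inequality on the spectral side, and the pointwise bounds on $\langle u_j F, G\rangle$ obtained below, the exponent $5/3$ on $M$ should emerge from balancing the Plancherel mass of $\psi$ against the averaged triple products rather than taking a trivial sum.

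The core estimate is then a uniform bound $|\langle u_j F, G\rangle|\ll(\log k)^{-\delta+\varepsilon}$ (and likewise for $E_t$), which I would establish via the Holowinsky--Soundararajan dichotomy adapted to decorrelation as in Constantinescu \cite{Constantinescu}, made effective following Lester--Matom\"aki--Radziwi{\l\l} \cite{LMR}. Split according to the size of $L(1,\sym^2 u_j)$: when this $L$-value is small, Holowinsky's sieve bound applied to the shifted convolution sums $\sum_n\lambda_f(n)\lambda_g(n+h)$ yields the saving; when it is not too small, Soundararajan's weak subconvex bound for the triple product $L(1/2,u_j\times f\times g)$, inserted through Watson's formula, yields the saving. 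The key arithmetic input on the Holowinsky side, replacing the divergent $\sum_{p\leq x}\lambda_f(p)^2/p\sim \log\log k$ from the diagonal case, is the bound $\sum_{p\leq x}\lambda_f(p)\lambda_g(p)/p = O(1)$ for $f\neq g$, a consequence of the holomorphy of $L(s, f\times g)$ at $s=1$. The Eisenstein part is handled in parallel by Rankin--Selberg unfolding and weak subconvexity for $L(1/2+it, f\times g)$, again using the absence of a pole.

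The hardest part will be running both halves of the dichotomy uniformly in the spectral parameter $t_j$ up to the cutoff $M$, with explicit polynomial tracking of $t_j$ and $k$ in both the sieve/shifted-convolution estimates and the weak subconvex bounds, and then optimizing the splitting threshold in $L(1,\sym^2 u_j)$ so that the two regimes match. This optimization, together with the large-sieve average on the spectral side, is what pins down the exponent $5/3$ on $M$ and the explicit constant $\delta\leq 1.19\times 10^{-41}$, whose numerical value comes from propagating all constants through Soundararajan's weak subconvexity and the Iwaniec/LMR sieve estimates without loss.
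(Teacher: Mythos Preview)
Your outline has the right building blocks (spectral expansion, Watson/Rankin--Selberg, Holowinsky's sieve), but the way you combine them contains genuine gaps. First, the dichotomy is misplaced: you propose to split on the size of $L(1,\sym^2 u_j)$, but this quantity is already bounded below by Hoffstein--Lockhart and appears in \emph{both} the Watson-type and Holowinsky-type bounds for $\langle u_j F,G\rangle$ in the same position, so splitting on it gains nothing. In the paper (following the Iwaniec/LMR scheme rather than the original Holowinsky--Soundararajan one) there is no case split at all: one produces two \emph{global} bounds for $\langle\psi F,G\rangle$ of the shape $M^{5/3}(\log k)^\varepsilon\prod_{p\le k}(1+L_p/p)$, one coming from the spectral side (Watson plus weak subconvexity) and one from an \emph{incomplete Poincar\'e series} expansion of $\psi$ (not a spectral one), and then interpolates them via a convex combination at the level of each Euler factor. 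The final $\delta$ is the outcome of the elementary optimization
\[
\min_{\alpha\in[0,1]}\ \max_{\lambda_1,\lambda_2\in[0,2]}\ \Bigl[-\alpha\bigl(\tfrac{\lambda_1^2-1}{2}+\tfrac{\lambda_2^2-1}{2}+\delta_1\bigr)-(1-\alpha)\bigl(\tfrac{(\lambda_1-1)^2}{4}+\tfrac{(\lambda_2-1)^2}{4}\bigr)\Bigr],
\]
which gives $-\delta=2\sqrt{2(2-\delta_1)}-4+\delta_1$.

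Second, two of your inputs are off. On the $L$-function side, Soundararajan's original weak subconvexity for $L(1/2,f\times g\times u_j)$ requires the Ramanujan bound for $u_j$, which is not known; the paper instead invokes Soundararajan--Thorner \cite{ST}, and it is precisely their exponent $1/(10^{17}\cdot 8^3)$ that produces $\delta_1\approx 10^{-20}$ and, after the optimization above, the stated $\delta\approx 1.19\times 10^{-41}$. On the Holowinsky side, the sieve for $\sum_n|\lambda_f(n)\lambda_g(n+m)|$ yields an Euler product in $|\lambda_f(p)|$ and $|\lambda_g(p)|$, so the signed bound $\sum_{p\le x}\lambda_f(p)\lambda_g(p)/p=O(1)$ that you highlight is irrelevant here; the holomorphy of $L(s,f\times g)$ at $s=1$ is used, but only in the $m=0$ (incomplete Eisenstein) term of the Poincar\'e decomposition, where one shifts the contour in $\sum_n\lambda_f(n)\lambda_g(n)n^{-s}$ past $\Re s=1$ without picking up a pole. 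As noted in the paper's \S\ref{subsec:ideas}, the obstruction is exactly that when $|\lambda_f(p)|\approx|\lambda_g(p)|\approx1$ Holowinsky fails, while $\lambda_f(p)^2\lambda_g(p)^2\approx1$ blocks the LMR-style Cauchy--Schwarz; this is why genuine weak subconvexity (via \cite{ST}) cannot be avoided.
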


\begin{remark}
  One of the main ingredients in the proof is Soundararajan and Thorner's recent wonderful result on weak subconvexity bounds of $L$-functions \cite{ST}, from which we get explicit dependence on $\psi$.
  Our exponent is much smaller than the diagonal case, for which Lester, Matom\"aki, and Radziwi{\l\l} \cite[Theorem 1.3]{LMR} give $0.0074$.\footnote{In \cite[P. 1625]{LMR} there is a misprint, one should take $\tfrac14(\lambda^2-1)^2\leq (\lambda-1)^2$ instead of $\tfrac14(\lambda^2-1)^2\leq \tfrac14(\lambda-1)^2$ which reduces the exponent from $0.0080...$ to $0.0074...$  as in Iwaniec \cite{Iwaniec}.}
  If we   assume the generalized Ramanujan conjecture (GRC) for $\GL(2)$ Hecke--Maass cusp forms, then we can take 
  $\delta=7/2-2\sqrt{3} \approx 0.0359$.
  One can prove effective decorrelation of Hecke--Maass cusp forms by our method under GRC.
   See \S \ref{subsec:ideas} below on discussions of the proof.
\end{remark}

%
%
%


As in the QUE case, we expect power saving upper bounds.
Let $f,g\in H_k$  and  $f\neq g$. Let $\psi\in \mathcal{C}_c^\infty(\mathbb X)$.
It is natural to conjecture that
  \begin{equation}\label{eqn:modularform_conj}
    \langle \psi F, G \rangle
    \ll_{\psi,\varepsilon} k^{-1/2+\varepsilon},
  \end{equation}
which is a consequence of the Grand Riemann hypothesis for the triple product $L$-functions $L(s,f\times g\times \phi)$ and the Rankin--Selberg $L$-functions $L(s,f\times g)$. Here $\phi$ is a Hecke--Maass cusp form for $\SL(2,\mathbb{Z})$.
We also expect the exponent $1/2$ is the best possible up to $\varepsilon$.

\begin{remark}
  To support the expectation of power saving with respect to the trivial bound $O(1)$, one may estimate a certain quantum variance by considering moments of central values of $L$-functions.
  Let $f\in H_k$. Let $\psi\in \mathcal{C}_c^\infty(\mathbb X)$ satisfy  \eqref{eqn:psi_cond} with $M=1$. Then one can at least prove
  \begin{equation} \label{eqn:QV-hol}
    \sum_{g\in H_k} | \langle \psi F, G \rangle |^2 \ll_\varepsilon  k^{2/3+\varepsilon}.
  \end{equation}
  In particular, we get $\langle \psi  F, G  \rangle
    \ll_\varepsilon  k^{-1/6+\varepsilon}$ for all but $O_\varepsilon(k^{1-\varepsilon})$ forms $g\in H_k$.
  To prove \eqref{eqn:QV-hol}, we can use Watson's formula, the approximate functional equation, the Petersson trace formula, 
  and bounds for $J$-Bessel functions.
\end{remark}

\subsection{Effective QUE and zero equidistribution}

Let $H_k$ be a Hecke orthonormal basis of $S_k$.
Holowinsky and Soundararajan \cite{HS} proved quantitative QUE for $f\in H_k$.
Lester, Matom\"aki, and Radziwi{\l\l} \cite{LMR} proved an effective version of quantitative QUE for $f\in H_k$.
As a consequence of Theorem \ref{thm:decor}, we can prove EQQUE for more general $f\in S_k$. We first introduce the following definition.

\begin{definition}\label{def}
  Let $J\geq1$ be a  positive integer.
  Define the set of \emph{$J$-near Hecke eigenforms} by
  \begin{multline*}
    H_k^{(J)} = \Big\{ f\in S_k : \textrm{there are distinct $f_j\in H_k$, ($j=1,2,\ldots,J$),}
     \\
     \textrm{and $(c_1,c_2,\ldots,c_J)\in \mathbb{C}^J$ such that $f= \sum_{1\leq j \leq J} c_j f_j$ }  \Big\}.
  \end{multline*}
\end{definition}
Note that any $f\in H_k^{(1)}$ is a Hecke eigenform.
Let $\boldsymbol{c}=(c_1,c_2,\ldots,c_J)\in \mathbb{C}^J$.
Note that $\|\boldsymbol{c}\|_2=(\sum_{1\leq j\leq J} |c_j|^2)^{1/2} = 1$ is equivalent to $\|f\|_2  = (\int_{\mathbb{X}} y^k |f(z)|^2 \dd \mu z)^{1/2} = 1$. We assume $\|f\|_2=1$.  Denote $F=y^{k/2}f$.
By EQQUE for Hecke eigenforms and Theorem \ref{thm:decor}, we get the following EQQUE result for $f\in H_k^{(J)}$.

\begin{theorem}\label{thm:QUE}
  Let $\psi\in \mathcal{C}_c^\infty(\mathbb X)$ be as in Theorem \ref{thm:decor}.
  Let $f\in H_k^{(J)}$ and $\|f\|_2=1$.  Then we have
  \[
    \langle \psi, |F|^2 \rangle = \frac{3}{\pi} \langle\psi,1\rangle + O_{\varepsilon}( J M^{5/3} (\log k)^{-\delta+\varepsilon}).
  \]
  In particular, when $\psi$ is fixed (i.e., $M=1$), then $f\in H_k^{(J)}$ satisfies QUE whenever $J\leq (\log k)^{\delta-\varepsilon}$.
\end{theorem}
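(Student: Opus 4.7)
\emph{Proof plan.} The argument is a short bilinear expansion that reduces the statement to a combination of Theorem \ref{thm:decor} (off-diagonal) and the known EQQUE for Hecke eigenforms (diagonal). First, unpacking Definition \ref{def}, I would write $f=\sum_{j=1}^J c_j f_j$ with distinct $f_j\in H_k$, so that $F=y^{k/2}f=\sum_{j=1}^J c_j F_j$ with $F_j=y^{k/2}f_j$. The orthonormality \eqref{eqn:innerproduct} of $\{F_j\}$ gives $\|f\|_2^2=\|\boldsymbol{c}\|_2^2$, so the normalization $\|f\|_2=1$ is equivalent to $\sum_j |c_j|^2=1$. Expanding,
\[
  \langle \psi,|F|^2\rangle \;=\; \sum_{i,j=1}^{J} c_i \overline{c_j}\, \langle \psi F_i, F_j\rangle.
\]

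Next, I would split this double sum into the diagonal $i=j$ and the off-diagonal $i\neq j$. For the diagonal, I invoke the effective quantitative QUE of Lester--Matom\"aki--Radziwi{\l\l} \cite{LMR} (with the dependence on $M$ from test functions satisfying \eqref{eqn:psi_cond}) to get
\[
  \langle \psi F_i, F_i\rangle \;=\; \tfrac{3}{\pi}\langle \psi, 1\rangle \;+\; O_\varepsilon\!\bigl(M^{5/3}(\log k)^{-\delta+\varepsilon}\bigr),
\]
which, after summing against $|c_i|^2$ and using $\sum_i |c_i|^2=1$, yields the main term $\tfrac{3}{\pi}\langle \psi,1\rangle$ with an admissible error. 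For the off-diagonal part, since the $f_j$ are distinct Hecke eigenforms in $H_k$, Theorem \ref{thm:decor} applies to every pair $(F_i,F_j)$ with $i\neq j$ and gives $\langle \psi F_i, F_j\rangle \ll_\varepsilon M^{5/3}(\log k)^{-\delta+\varepsilon}$. By Cauchy--Schwarz,
\[
  \sum_{i\neq j} |c_i \overline{c_j}| \;\leq\; \Bigl(\sum_{i=1}^J |c_i|\Bigr)^2 \;\leq\; J\sum_{i=1}^J |c_i|^2 \;=\; J,
\]
so the off-diagonal contribution is $O_\varepsilon(J M^{5/3}(\log k)^{-\delta+\varepsilon})$. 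Adding the two estimates produces exactly the bound stated in Theorem \ref{thm:QUE}.

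There is essentially no independent ``hard part'' here: the deep input is Theorem \ref{thm:decor} (which governs the off-diagonal terms and also the exponent $\delta$), together with the holomorphic QUE of Holowinsky--Soundararajan \cite{HS} in effective form \cite{LMR} for the diagonal. The only point requiring care is ensuring that the diagonal EQQUE is cited in a form whose $M$- and $\log k$-dependence is compatible with Theorem \ref{thm:decor}, so that the two error terms can be combined under a single $\delta$ (in particular, using the smaller $\delta$ coming from Theorem \ref{thm:decor}). Once this matching is confirmed, the proof is immediate, and it is this bilinear structure that explains the linear factor $J$ in the final error term and thus restricts the admissible growth of $J$ to a small power of $\log k$.
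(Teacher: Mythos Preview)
Your proposal is correct and follows essentially the same route as the paper: expand bilinearly, use \cite[Theorem~1.3]{LMR} on the diagonal and Theorem~\ref{thm:decor} off-diagonally, then bound $\sum_{i\ne j}|c_i\overline{c_j}|$ by $O(J)$ (the paper uses $|c_i\overline{c_j}|\le |c_i|^2+|c_j|^2$ rather than Cauchy--Schwarz, to the same effect). The one point you flagged is real: \cite{LMR} gives a diagonal error of the shape $M^2(\log k)^{-0.0074}$, not $M^{5/3}(\log k)^{-\delta}$, so the $M$-exponent does not immediately match. The paper resolves this by appealing to the trivial bound $\langle\psi,|F|^2\rangle\ll 1$: the claimed error $JM^{5/3}(\log k)^{-\delta+\varepsilon}$ is only informative when it is $\le 1$, which forces $M\le(\log k)^{3\delta/5}$, and in that regime $M^2(\log k)^{-0.007}\le(\log k)^{6\delta/5-0.007}$ is easily absorbed since $\delta$ is tiny.
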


\begin{remark}
  Since we have explicit dependence on $J$ and $M$ in the error term, we can allow both $J$ and $M$ to grow slowly as $k\rightarrow\infty$. For $M$ large, we get small scale mass equidistribution of $f\in H_k^{(J)}$.
\end{remark}

Let $f_1$ and $f_2$ be two distinct Hecke eigenforms in $H_k$. It is natural to ask the distribution of the solutions of $f_1(z)=c \cdot f_2(z)$ (counting with multiplicity) in a fundamental domain
for any $c\in \mathbb{C}$.
For the case $c=0$, this question asks the distribution of zeros of Hecke eigenforms.
The fact that $S_k$ is a Hilbert space makes this question be the same as to ask the zero distribution of the modular form $f=f_1-cf_2$, which is not a Hecke eigenform. Note that $f\in H_k^{(2)}$.

Rudnick \cite{Rudnick} proved that the zeros of Hecke eigenforms are equidistributed under QUE. 
Lester--Matom\"aki--Radziwi{\l\l} \cite{LMR} found an effective proof of Rudnick's theorem. Note that the key ingredient in their methods is QUE results for cusp forms, and equidistribution of zeros holds even for non Hecke eigenforms which satisfy QUE.
Hence we can prove equidistribution of zeros of $f\in H_k^{(J)}$ as $k\rightarrow \infty$
by their method together with Theorem \ref{thm:QUE}. We have the following result.

\begin{theorem}\label{thm:equidistribution}
  Let $k$ be a large even integer and $J\leq (\log k)^{\delta-\varepsilon}$ be a positive integer.
  As $k\rightarrow\infty$, the zeros of any  $f\in H_k^{(J)}\setminus \{0\}$ are equidistributed in $\mathbb{X}$ with respect to the normalized hyperbolic measure $(3/\pi) (\dd x \dd y /y^2)$.
\end{theorem}


Theorem \ref{thm:equidistribution} is a consequence of  \cite[Theorem 1.2]{Rudnick} and Theorem \ref{thm:QUE}. We will verify the condition that the order of $f$ at infinity $\nu_\infty(f)=o(k)$ in \S\ref{sec:zero}.\footnote{Recently, Kimmel \cite{Kimmel} noticed that QUE for $f$ implies $\nu_\infty(f)=o(k)$ by Rudnick's proof in \cite{Rudnick}.} 
One may also prove small scale equidistribution of the zeros of $f\in H_k^{(J)}$ as  in
Lester--Matom\"aki--Radziwi{\l\l}  \cite{LMR}.
See also \cite[\S9.3]{LR} for more discussions on equidistribution of zeros.

\subsection{Ideas of the proofs}\label{subsec:ideas}

To prove Theorem \ref{thm:decor}, we will use ideas from Holowinsky--Soundararajan, Iwaniec, and Lester--Matom\"aki--Radziwi{\l\l}.
As things currently stand, it is hard to use Soundararajan's original weak subconvexity bounds \cite{Soundararajan2010weak} to prove effective QUE and decorrelation, since the Ramanujan conjecture is not proven when the test function is a Hecke--Maass cusp form.
To overcome the difficulty from the unknown Ramanujan conjecture for Hecke--Maass cusp forms $\phi$, we will use the weak subconvexity bounds for the triple product $L$-functions $L(1/2,f\times g\times \phi)$ due to Soundararajan and Thorner \cite{ST}.
However, the saving is a very small power of $\log k$, so that the original approach of Holowinsky and Soundararajan may not work.
It is a surprise that  effective decorrelation can  still be proved from the small saving of Soundararajan and Thorner.
To succeed, we prove bounds by both Soundararajan's approach and Holowinsky's approach in forms that we   can use an optimization technique based on Iwaniec/Lester--Matom\"aki--Radziwi{\l\l} to achieve a small  power saving of $\log k$.

It seems difficult to avoid a use of weak subconvexity bounds of $L$-functions as Iwaniec and Lester--Matom\"aki--Radziwi{\l\l} did. In their Soundararajan's approach (see e.g. \cite[\S4.1]{LMR}), it is important to use the fact $f\times f=1\boxplus\sym^2 f$, so that it is $\lambda_f(p^2)^2$ matters. In the case $f\neq g$, if we do the same Cauchy--Schwarz, we face  $\lambda_f(p)^2 \lambda_g(p)^2$ or something similar.
In Holowinsky's approach, it does not work only when $|\lambda_f(p)|\approx |\lambda_g(p)|\approx 1$ for most  $p\leq k$, that is, $|\lambda_f(p^2)|\approx |\lambda_g(p^2)|\approx 0$ by the Hecke relations.
In the case $f=g$, we get  $\lambda_f(p^2)^2\approx 0$ which makes Soundararajan's approach (as in \cite[\S4.1]{LMR}) work.
While for the case $f\neq g$, we get
$\lambda_f(p)^2 \lambda_g(p)^2 \approx 1$, which is not what we want.
But we still expect Soundararajan's approach will work since we expect $|\lambda_f(p)|\approx |\lambda_g(p)|\approx 1$ is false from both the condition  $f\neq g$ and $f,g$ being cuspidal. It seems not easy to make full use of the condition $f\neq g$ in Iwaniec/Lester--Matom\"aki--Radziwi{\l\l}'s method.

Another difficulty occurs in Holowinsky's approach when $f\neq g$.
For the incomplete Eisenstein series, for the contribution from the zero Fourier coefficient $I_0(Y)$ in \eqref{eqn:I0}, we have $\lambda_f(n) \lambda_g(n) $ which is not always nonnegative as in the diagonal case we actually have $|\lambda_f(n)|^2$, so we can not use the asymptotic formula for $a_{\Psi,0}$ in Remark \ref{rmk:a0} as Holowinsky did. We solve this problem by making use of the full expression of $a_{\Psi,0}$ in Lemma \ref{lemma:FC}. See the proof of Lemma \ref{lemma:Eisenstein} for more details.%
\footnote{In Constantinescu \cite[Lemma 6.4]{Constantinescu}, there is a similar problem. One may not just apply the asymptotic formula for $a_0(y)$ as in the paper, since for the error term the factor $f(z)\overline{g(z)}$ is not always nonnegative. One may use our trick to complete the proof.}

\subsection{Plan for this paper}
The rest of this paper is organized as follows.
In \S \ref{sec:preliminaries}, we give a review of the theory of automorphic forms and Watson's formula, and prove a lemma on test functions that we will need later.
In \S \ref{sec:Soundararajan}, we use a variant of Soundararajan's approach to prove a bound for the inner product $\langle\psi F,G\rangle$. In \S \ref{sec:Holowinsky}, a variant of Holowinsky's approach is applied to prove another bound for the inner product $\langle\psi F,G\rangle$.
Then we combine these two bounds to prove Theorem \ref{thm:decor} by a local optimization in \S \ref{sec:proof}.
In \S \ref{sec:QUE}, we use Theorem \ref{thm:decor} to prove EQQUE (Theorem \ref{thm:QUE}).
Finally, 
In \S \ref{sec:zero},  we will show that EQQUE implies the bulk of zeros lies in the fundamental domain, and hence prove Theorem \ref{thm:equidistribution}.

\medskip
\textbf{Notation.}
Throughout the paper, $\varepsilon$ is an arbitrarily small positive number;
all of them may be different at each occurrence.
As usual, $e(x)=e^{2\pi i x}$.
We use $y\asymp Y$ to mean that $c_1 Y\leq |y|\leq c_2 Y$ for some positive constants $c_1$ and $c_2$.
The symbol $\ll_{a,b}$ denotes that the implied constant depends at most on $a$ and $b$.

\section{Preliminaries}\label{sec:preliminaries}

\subsection{Automorphic forms}

In this subsection, we briefly review the theory of $\GL(2)$ automorphic forms and state some basic facts which will be used in the proof of the theorems. \emph{Cf. } \cite[Chap. 14 and 15]{IwaniecKowalski2004analytic}.

Let $S_k$ be the Hilbert space of all weight $k$ holomorphic cusp forms for $\SL(2,\mathbb{Z})$. Let $H_k$ be a Hecke orthonormal basis of $S_k$.
For $f\in H_k$, we have
\[
  \|f\|_2^2 = \int_{\mathbb{X}} y^k |f(z)|^2 \dd \mu z = 1.
\]
Write the Fourier expansion of $f$ as
\[
  f(z) = a_f(1) \sum_{n\geq1} \lambda_f(n) n^{\frac{k-1}{2}} e(nz).
\]
Here $\lambda_f(n)\in \mathbb{R}$ is the eigenvalue of the $n$-th Hecke operator $T_n$.
By Deligne \cite{Deligne}, we have $|\lambda_f(n)|\leq \tau(n)=\sum_{d\mid n}1\ll n^\varepsilon$.
By the  theory of Rankin--Selberg, we have (see e.g. \cite[\S2.1]{BlomerKhanYoung})
\begin{equation}\label{eqn:firstcoeff}
  |a_f(1)|^2 = \frac{2\pi^2 (4\pi)^{k-1}}{\Gamma(k) L(1,\sym^2 f)}.
\end{equation}

Denote an orthonormal basis of Hecke--Maass cusp forms by $\{\phi_j\}_{j\geq1}$. For $\phi\in\{\phi_j\}_{j\geq1}$, we have $\Delta  \phi = (1/4+t_\phi^2) \phi$, where $t_\phi>1$ is the spectral parameter of $\phi$.

The Eisenstein series is defined by
\[
  E(z,s) := \sum_{\gamma\in\Gamma_\infty\backslash \Gamma} (\Im \gamma z)^s, \quad \Re(s)>1.
\]
This has meromorphic continuation to $\Re(s)\leq 1$.
The Fourier expansion is given by
\begin{equation}\label{eqn:FE-ES}
  E(z,s) = y^s + \frac{\theta(1-s)}{\theta(s)} y^{1-s}
  + \frac{2\sqrt{y}}{\theta(s)} \sum_{n\neq0} \tau_{s-1/2}(|n|) K_{s-1/2}(2\pi |n|y) e(nx),
\end{equation}
where $\theta(s)=\pi^{-s}\Gamma(s)\zeta(2s)$ and $\tau_{\nu}(n)=\sum_{ab=n}(a/b)^\nu$.
By Huang--Xu \cite{HuangXu}, we have the following sup-norm bounds
\begin{equation}\label{eqn:HX}
  E_t(z):=E(z,1/2+it) \ll_\varepsilon \sqrt{y} (1+|t|)^{3/8+\varepsilon} , \quad y\geq 1/2.
\end{equation}

To deal with a general observable $\psi$, we will use the following well-known decomposition so that we need to consider the Hecke--Maass cusp forms and the Eisenstein series.
\begin{lemma}[Selberg spectral decomposition]  \label{Lem:SSD}
  Let $\psi\in L^2(\mathbb{X})$. Then we have
  \[
    \psi(z) = \frac{3}{\pi} \langle \psi,1\rangle
    + \sum_{j\geq1} \langle \psi,\phi_j \rangle \phi_j(z)
    + \frac{1}{4\pi} \int_{\mathbb{R}} \langle \psi,E_t \rangle E_t(z) \dd t.
  \]
\end{lemma}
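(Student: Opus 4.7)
The plan is to decompose $L^2(\mathbb{X})$ into its three $\Delta$-invariant orthogonal pieces — constants, cuspidal part, and Eisenstein continuous part — and then identify the coefficients in the claimed expansion with the orthogonal projections onto each piece. This is the classical Selberg spectral decomposition, so the proof is essentially a matter of assembling standard ingredients and keeping track of normalizations.

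First I would recall the standard Hilbert space decomposition $L^2(\mathbb{X}) = \mathbb{C}\cdot 1 \oplus L^2_{\mathrm{cusp}}(\mathbb{X}) \oplus L^2_{\mathrm{Eis}}(\mathbb{X})$, where $L^2_{\mathrm{cusp}}$ is the closed span of the orthonormal basis $\{\phi_j\}_{j\geq 1}$ of Hecke--Maass cusp forms and $L^2_{\mathrm{Eis}}$ is obtained from the continuous spectrum of the Laplacian. For the constant component I would compute $\vol(\mathbb{X}) = \int_{\mathcal{F}} \dd x\, \dd y/y^2 = \pi/3$, so the projection of $\psi$ onto $\mathbb{C}\cdot 1$ equals $\langle \psi,1\rangle / \vol(\mathbb{X}) = (3/\pi)\langle \psi,1\rangle$. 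The cuspidal projection is immediate from orthonormality of $\{\phi_j\}$ and Parseval's identity, yielding $\sum_{j\geq 1}\langle \psi,\phi_j\rangle \phi_j$ with $L^2$-convergence.

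The substantive step is the Eisenstein contribution. Here I would invoke the spectral theorem for $\Delta$ restricted to $L^2_{\mathrm{Eis}}(\mathbb{X})$, in the guise of the Maass--Selberg relations, which parametrize the continuous spectrum by $t\in\mathbb{R}$ with Plancherel density $1/(4\pi)$. The factor $1/(4\pi)$ is the combination of a $1/(2\pi)$ Fourier-type normalization with an extra $1/2$ coming from the functional equation $E(z,1/2+it) = (\theta(1/2-it)/\theta(1/2+it))\, E(z,1/2-it)$, which identifies the two rays $t$ and $-t$ up to a unimodular factor so that one avoids double-counting. This produces $\frac{1}{4\pi}\int_{\mathbb{R}}\langle \psi,E_t\rangle E_t(z)\, \dd t$ with convergence in $L^2(\mathbb{X})$. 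Summing the three projections gives the stated identity. The main obstacle, if any, is the rigorous justification of the Plancherel formula for the continuous spectrum via truncation and Maass--Selberg; this is classical and present in standard references (e.g. Iwaniec's \emph{Spectral methods of automorphic forms}), so I would simply cite rather than reprove it.
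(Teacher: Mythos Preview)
Your sketch is essentially correct and follows the standard route to the Selberg spectral decomposition. Note, however, that the paper does not actually prove this lemma: it is stated without proof as a classical result (as is common), so there is no ``paper's own proof'' to compare against. Your outline---orthogonal splitting into constants, cusp forms, and the Eisenstein continuous spectrum, with the $3/\pi$ from $\vol(\mathbb{X})=\pi/3$ and the $1/(4\pi)$ Plancherel density from the Maass--Selberg relations---is exactly the argument one finds in standard references such as Iwaniec's \emph{Spectral Methods of Automorphic Forms}, which is what the paper implicitly relies on.
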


For the coefficients above, we will need the following bounds.
\begin{lemma}\label{lem:psi}
  Let $M\geq1$ be a positive number and $\psi\in \mathcal{C}_c^\infty(\mathbb X)$ satisfy \eqref{eqn:psi_cond}.
  Let $\phi$ be an $L^2$-normalized Hecke--Maass cusp form with the spectral parameter $t_\phi$.
  Then for all $A\geq0$, we have
  \[
    |\langle \psi,\phi \rangle| \ll_A  \left( \frac{M}{t_\phi}\right)^{A}
    \quad \textrm{and} \quad
    | \langle \psi, E_t \rangle | \ll_A \left( \frac{M}{1+|t|}\right)^{A}(1+|t|)^{3/8+\varepsilon}.
  \]
\end{lemma}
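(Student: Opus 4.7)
\medskip
\noindent\textbf{Proof plan for Lemma \ref{lem:psi}.}
The plan is to exploit the eigenfunction equation for $\Delta$ and integrate by parts. Since $\Delta\phi=(1/4+t_\phi^2)\phi$ and $\Delta E_t=(1/4+t^2)E_t$, and since $\psi\in\mathcal{C}_c^\infty(\mathbb{X})$ is supported in the interior of $\mathcal{F}$ so there are no boundary contributions, one has
\[
  \langle \psi,\phi\rangle = \frac{1}{(1/4+t_\phi^2)^A}\langle \Delta^A\psi,\phi\rangle,
  \qquad
  \langle \psi,E_t\rangle = \frac{1}{(1/4+t^2)^A}\langle \Delta^A\psi,E_t\rangle
\]
for every $A\in\mathbb{Z}_{\geq 0}$. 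The task thus reduces to controlling $\Delta^A\psi$ pointwise.

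The key technical step is to show that the symbol class defined by \eqref{eqn:psi_cond} is (essentially) preserved by $\Delta=-y^2(\partial_x^2+\partial_y^2)$, with cost $M^2$ per application. Concretely, I would verify by an induction on $A$ using Leibniz's rule that
\[
  y^{a+b}\frac{\partial^a}{\partial x^a}\frac{\partial^b}{\partial y^b}(\Delta^A\psi)(z)\ \ll_{a,b,A}\ M^{a+b+2A}.
\]
The induction step amounts to expanding $\partial_x^a\partial_y^b(y^2\partial_x^2\psi+y^2\partial_y^2\psi)$ via Leibniz on the factor $y^2$ (which produces only three terms since $\partial_y^j y^2=0$ for $j\geq 3$), then rewriting each resulting monomial back in the form $y^{\alpha+\beta}\partial_x^\alpha\partial_y^\beta\psi$ and invoking the inductive hypothesis. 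Setting $a=b=0$ then yields $\|\Delta^A\psi\|_\infty\ll_A M^{2A}$.

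For the Maass cusp form, apply Cauchy--Schwarz together with $\|\phi\|_2=1$:
\[
  |\langle \Delta^A\psi,\phi\rangle|\leq \|\Delta^A\psi\|_\infty\cdot\vol(\supp\psi)^{1/2}\ll M^{2A},
\]
using that $\supp\psi\subset\mathcal{F}$ has $\vol\leq\vol(\mathcal{F})\ll 1$ in the hyperbolic measure. Dividing by $(1/4+t_\phi^2)^A$ gives $|\langle\psi,\phi\rangle|\ll (M/t_\phi)^{2A}$, and since $A$ is arbitrary this is equivalent to the stated $(M/t_\phi)^A$ bound.

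For the Eisenstein series, $E_t\notin L^2$ but we only integrate over $\supp\psi\subset\{y\geq \sqrt{3}/2\}$, so the pointwise bound \eqref{eqn:HX} applies. Combining it with the above yields
\[
  |\langle \Delta^A\psi,E_t\rangle|\ \ll\ M^{2A}(1+|t|)^{3/8+\varepsilon}\int_{\sqrt{3}/2}^{BM}\!\frac{\sqrt{y}}{y^2}\,\dd y\ \ll\ M^{2A}(1+|t|)^{3/8+\varepsilon},
\]
since the $y$-integral is uniformly bounded. Dividing by $(1/4+t^2)^A$ gives the second bound with $2A$ in place of $A$. The expected main obstacle is really just the bookkeeping in the pointwise estimate for $\Delta^A\psi$; everything else is integration by parts and Cauchy--Schwarz.
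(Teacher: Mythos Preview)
Your approach is correct and is precisely the argument the paper has in mind: establish $\|\Delta^A\psi\|_\infty\ll_A M^{2A}$ from the symbol condition \eqref{eqn:psi_cond}, then transfer to $\phi$ and $E_t$ via self-adjointness of $\Delta$ together with the pointwise bound \eqref{eqn:HX} (the paper states $\Delta^\ell\psi\ll_\ell M^{2\ell}$ and then defers to \cite[Lemma~4.3]{LMR} for exactly this integration-by-parts step). One small remark: the extra hypotheses you invoke (support in the interior of $\mathcal F$, and $y\le BM$) are not part of the lemma and are not needed---self-adjointness holds because $\mathbb X$ has no boundary and $\psi$ is compactly supported, and the integral $\int_{\sqrt{3}/2}^\infty y^{-3/2}\,\dd y$ converges regardless of any upper cutoff.
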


\begin{proof}
  Since $\psi\in \mathcal{C}_c^\infty(\mathbb X)$ satisfies \eqref{eqn:psi_cond} and $\Delta=-y^2 (\partial^2 /\partial x^2+ \partial^2/\partial y^2)$, we have
  \[
    \Delta^\ell \psi \ll_\ell M^{2\ell}, \quad \textrm{for all $\ell \in \mathbb{Z}_{\geq0}$}.
  \]
  The rest of the proof is the same as \cite[Lemma 4.3]{LMR} with \eqref{eqn:HX} replacing \cite[Lemma 4.2]{LMR}.
  Note that we have $\langle \psi,\phi \rangle \ll \int_{\mathbb{X}} |\phi(z)| \dd \mu z \ll1$ and
  $\langle \psi, E_t \rangle \ll \int_{\mathbb{X}} \sqrt{y} (1+|t|)^{3/8+\varepsilon} \dd \mu z \ll (1+|t|)^{3/8+\varepsilon} $. We allow $A\geq0$ instead of $A\geq2$ as in  \cite[Lemma 4.3]{LMR}.
\end{proof}


\subsection{Watson's formula}

In this subsection, we state some formulas which relate the integral representations of automorphic forms to their $L$-values.
We will follow \cite[\S2.3]{BlomerKhanYoung}.


Define
\[
  \mathcal{H}(k,t) = \frac{\pi^3 |\Gamma(k-1/2+it)|^2}{2 \Gamma(k)^2}.
\]
For $\phi$ being an even Hecke--Maass cusp form, Watson's formula gives
\begin{align}
  |\langle \phi F,G \rangle |^2
  & = \frac{\Lambda(1/2,f\times \bar g\times \phi)}{8 \Lambda(1,\sym^2  f) \Lambda(1,\sym^2 \bar g) \Lambda(1,\sym^2 \phi)} \nonumber \\
  & = \frac{L(1/2,{f}\times g\times \phi)} {L(1,\sym^2   f) L(1,\sym^2 g) L(1,\sym^2 \phi)} \mathcal{H}(k,t_\phi). \label{eqn:Watson-MHH}
\end{align}
The classical Rankin--Selberg theory computes the projection of $F\bar
G$ onto the Eisenstein series and the formula is
\begin{align} \label{eqn:RS}
  \frac{1}{4\pi} |\langle E_t F, G \rangle |^2
  & = \frac{1}{2\pi} \frac{|L(1/2+it,{f}\times \bar g)|^2} {L(1,\sym^2 f) L(1,\sym^2 \bar g) |\zeta(1+2it)|^2} \mathcal{H}(k,t).
\end{align}

%
%
%
%
%
%
%
%

%

%




\section{Soundararajan's approach} \label{sec:Soundararajan}

From this section, we start to prove effective quantitative decorrelation of holomorphic Hecke eigenforms (Theorem \ref{thm:decor}). We will combine ideas from Holowinsky--Soundararajan, Iwaniec, and Lester--Matom\"aki--Radziwi{\l\l}.

In this section we apply a variant of Soundararajan's approach.
We first consider the case when the test function is a Hecke--Maass cusp form. We have the following lemma, which is the main difference from previous works.
\begin{lemma} \label{lem:Maass}
  Let $f,g\in H_k$  and  $f\neq g$.
  Denote $F=y^{k/2} f$ and $G=y^{k/2} g$.
  Let $\phi$ be an even $L^2$-normalized Hecke--Maass cusp form for $\SL(2,\mathbb{Z})$ with the spectral parameter $t_\phi$. Assume $t_\phi\leq k^{1/2}$. Then we have
  \[
    \langle \phi F,G \rangle \ll_\varepsilon
       t_\phi^{1/2}  (\log k)^{-\delta_1+\varepsilon}
       \prod_{p\leq k} \left(1 - \frac{\tfrac12 \lambda_f(p^2) + \tfrac12  \lambda_g(p^2)}{p} \right),
  \]
  where $\delta_1=9.765625\times 10^{-21}$. 
  Furthermore, if we assume GRC for $\phi$, then  we can take $\delta_1=1/2$.
\end{lemma}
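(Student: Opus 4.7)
The plan is to follow Soundararajan's approach: relate the inner product to an $L$-value via Watson's formula \eqref{eqn:Watson-MHH}, and then bound the central value by weak subconvexity. Starting from \eqref{eqn:Watson-MHH},
\[
  |\langle \phi F, G \rangle|^2 = \frac{L(1/2, f \times g \times \phi)\, \mathcal{H}(k, t_\phi)}{L(1, \sym^2 f)\, L(1, \sym^2 g)\, L(1, \sym^2 \phi)},
\]
so the proof reduces to upper bounding the central value and the archimedean factor and lower bounding the symmetric-square $L$-values at $s=1$.

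First, Stirling's formula gives $\mathcal{H}(k, t_\phi) \asymp k^{-1}$ when $t_\phi \leq k^{1/2}$, since the contribution $e^{-t_\phi \arctan(t_\phi/k)}$ is $O(1)$ in this range. Next, the degree-$8$ Rankin--Selberg $L$-function $L(s, f \times g \times \phi)$ has archimedean parameters $\pm(k-1) \pm it_\phi$ (each once) and $\pm it_\phi$ (each with multiplicity $2$), so under the hypothesis $t_\phi \leq k^{1/2}$ its analytic conductor $C$ satisfies $C \asymp k^4(1+t_\phi)^4$ and in particular $C^{1/4} \ll k(1+t_\phi)$. The weak subconvexity bound of Soundararajan--Thorner \cite{ST}, which requires no Ramanujan hypothesis for $\phi$, then yields
\[
  L(1/2, f \times g \times \phi) \ll_\varepsilon k(1+t_\phi)(\log k)^{-2\delta_1+\varepsilon},
\]
with $\delta_1 = 9.765625 \times 10^{-21}$; under GRC for $\phi$, one may instead invoke Soundararajan's original bound \cite{Soundararajan2010weak} to replace the exponent by $2-\varepsilon$, which yields $\delta_1 = 1/2$ after the final square root. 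The Maass factor $L(1, \sym^2 \phi) \gg (\log t_\phi)^{-\varepsilon}$ is handled by the classical Hoffstein--Lockhart-type lower bound.

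The remaining, and most delicate, step is to extract the prime product from $1/\sqrt{L(1, \sym^2 f)\, L(1, \sym^2 g)}$. Expanding the Euler product gives $\log L_p(1, \sym^2 f) = \lambda_f(p^2)/p + O(p^{-2})$, and a Mertens-type truncation combined with the elementary inequality $e^x \geq 1+x$ yields
\[
  L(1, \sym^2 f)\, L(1, \sym^2 g) \gg (\log k)^{-\varepsilon} \prod_{p \leq k}\bigl(1 + (\lambda_f(p^2) + \lambda_g(p^2))/p\bigr).
\]
Taking the reciprocal square root and using $(1+x)^{-1/2} = 1 - x/2 + O(x^2)$ (the $O$-tail being summable over primes), one bounds it by $\prod_{p \leq k}\bigl(1 - (\lambda_f(p^2) + \lambda_g(p^2))/(2p)\bigr)$ up to $(\log k)^\varepsilon$. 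Assembling the factors $k^{-1}$ (from $\mathcal{H}$), $k(1+t_\phi)(\log k)^{-2\delta_1+\varepsilon}$ (from the central value), and the prime product, then taking square roots (so $1+t_\phi \asymp t_\phi$ since $t_\phi \geq 1$ for any Maass form on $\SL(2,\mathbb{Z})$), produces the claimed bound. The main obstacle is preserving this precise prime-product shape through the manipulations, since it is exactly the combination $\lambda_f(p^2) + \lambda_g(p^2)$ that later permits the interpolation with the Holowinsky-type bound of \S\ref{sec:Holowinsky} in the local optimization performed in \S\ref{sec:proof}.
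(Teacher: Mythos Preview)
Your proposal is correct and follows essentially the same route as the paper: Watson's formula plus Stirling, Hoffstein--Lockhart for $L(1,\sym^2\phi)$, Soundararajan--Thorner weak subconvexity for the triple-product central value (with Soundararajan's original bound under GRC), and the prime-product expression for $L(1,\sym^2 f)^{-1/2}L(1,\sym^2 g)^{-1/2}$. The paper obtains the last step by citing \cite[Lemma~2]{HS} directly rather than re-deriving it, and makes explicit that $f\neq g$ is what guarantees $f\times g$ is cuspidal on $\GL(4)$ so that \cite{ST} applies; also note Hoffstein--Lockhart only gives $L(1,\sym^2\phi)\gg(\log t_\phi)^{-1}$, not $(\log t_\phi)^{-\varepsilon}$, though this is harmless here since $t_\phi\le k^{1/2}$.
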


\begin{proof}
  By Watson's formula and Stirling's formula, we have
  \begin{equation}\label{eqn:Watson1}
    \langle \phi F,G \rangle \ll_\varepsilon
    \frac{1}{k^{1/2}} \frac{L(1/2,{f}\times g\times \phi)^{1/2}} {L(1,\sym^2   f)^{1/2} L(1,\sym^2 g)^{1/2} L(1,\sym^2 \phi)^{1/2}}.
  \end{equation}
  By the work of Hoffstein and Lockhart \cite{HL} we have
  \begin{equation}\label{eqn:HL}
    L(1,\sym^2 \phi)^{-1} \ll \log t_\phi.
  \end{equation}
  By Holowinsky and Soundararajan \cite[Lemma 2]{HS} we get
  \begin{equation}\label{eqn:HS}
    \begin{split}
      L(1,\sym^2   f)^{-1} \ll (\log\log k)^3  \prod_{p\leq k} \left(1 - \frac{  \lambda_f(p^2) }{p} \right), \\
    L(1,\sym^2   g)^{-1} \ll (\log\log k)^3  \prod_{p\leq k} \left(1 - \frac{  \lambda_g(p^2) }{p} \right).
    \end{split}
  \end{equation}

  Now we need to bound the $L$-function $L(1/2,{f}\times g\times \phi)$.
  The analytic conductor $\mathfrak{C}$ of this $L$-function satisfies $\mathfrak{C} \asymp k^4 t_\phi^4$. Since we assume $f$ and $g$ are orthogonal, the automorphic representation $\pi_1$ corresponding to ${f}\times g$ is a $\GL(4)$ cuspidal automorphic representation. Let $\pi_2$ be the $\GL(2)$ cuspidal automorphic representation corresponding to $\phi$. Then by Soundararajan and Thorner \cite[Corollary 2.7]{ST} we obtain
  \begin{equation}\label{eqn:ST}
    L(1/2,{f}\times g\times \phi) \ll  k t_\phi  (\log k)^{-1/(10^{17} \times 8^3)}.
  \end{equation}
  If we further assume the GRC for $\phi$, then by Soundararajan \cite{Soundararajan2010weak} we get
  \begin{equation*}
    L(1/2,{f}\times g\times \phi) \ll  k t_\phi  (\log k)^{-1+\varepsilon}.
  \end{equation*}
  Combining \eqref{eqn:Watson1}, \eqref{eqn:HL}, \eqref{eqn:HS}, and \eqref{eqn:ST}, we complete the proof.
\end{proof}

Next, we consider the case when the test function is an Eisenstein series.

\begin{lemma}\label{lem:Eisenstein}
  Let $f,g\in H_k$  and  $f\neq g$.
  Denote $F=y^{k/2} f$ and $G=y^{k/2} g$.
  Let $E(z,s)$ be the Eisenstein series for $\SL(2,\mathbb{Z})$ and $E_t=E(\cdot,1/2+it)$.
  Assume $t\in\mathbb{R}$ and $|t|\leq k^{1/2}$. Then we have
  \[
    \langle E_t F,G \rangle \ll_\varepsilon
      (1+|t|)^{1/2}  (\log k)^{-1+\varepsilon}
       \prod_{p\leq k} \left(1 - \frac{\tfrac12 \lambda_f(p^2) + \tfrac12  \lambda_g(p^2)}{p} \right).
  \]
\end{lemma}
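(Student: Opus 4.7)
\smallskip
\noindent\textbf{Proof proposal.} The plan is to mimic the argument of Lemma \ref{lem:Maass}, with the Rankin--Selberg identity \eqref{eqn:RS} playing the role of Watson's formula \eqref{eqn:Watson-MHH}. First I would apply \eqref{eqn:RS} together with Stirling, which under the assumption $|t|\leq k^{1/2}$ gives $\mathcal{H}(k,t)\asymp 1/k$, to obtain
\[
|\langle E_t F,G\rangle|^2 \ll \frac{1}{k}\cdot\frac{|L(1/2+it,f\times\bar g)|^2}{L(1,\sym^2 f)\,L(1,\sym^2 \bar g)\,|\zeta(1+2it)|^2}.
\]
The two symmetric square factors are bounded below by Holowinsky--Soundararajan's Lemma 2 exactly as in \eqref{eqn:HS}, producing the desired Euler product over $p\leq k$ (together with a harmless $(\log\log k)^{6}$ that is absorbed into $(\log k)^{\varepsilon}$). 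For the zeta factor I would use the standard $|\zeta(1+2it)|^{-1}\ll\log(1+|t|)$, which is again negligible for $|t|\leq k^{1/2}$.

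The core step is the bound for $L(1/2+it,f\times\bar g)$. Since $f,g\in H_k$ have real Hecke eigenvalues, $\bar g=g$, and the hypothesis $f\neq g$ guarantees that $f\otimes g$ is a self-dual $\GL(4)$ cuspidal automorphic representation. The key observation is that its finite Hecke eigenvalues, being of the form $\lambda_f(p)\lambda_g(p)$ etc., satisfy the Ramanujan bound by Deligne, so Soundararajan's \emph{original} weak subconvexity bound \cite{Soundararajan2010weak} applies directly (no need for the Soundararajan--Thorner refinement used in Lemma \ref{lem:Maass}). Because the Rankin--Selberg archimedean factor for two weight-$k$ holomorphic forms is $\Gamma_{\mathbb{C}}(s)\Gamma_{\mathbb{C}}(s+k-1)$, the analytic conductor at $s=1/2+it$ is $\mathfrak{C}\asymp k^{2}(1+|t|)^{2}$, and weak subconvexity yields
\[
L(1/2+it,f\times g)\ll \mathfrak{C}^{1/4}(\log\mathfrak{C})^{-1+\varepsilon}\ll k^{1/2}(1+|t|)^{1/2}(\log k)^{-1+\varepsilon}.
\]

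Assembling the pieces and taking a square root produces
\[
\langle E_t F,G\rangle \ll_{\varepsilon} (1+|t|)^{1/2}(\log k)^{-1+\varepsilon}\prod_{p\leq k}\left(1-\frac{\tfrac12\lambda_f(p^{2})+\tfrac12\lambda_g(p^{2})}{p}\right),
\]
as required. The only point that needs care is the conductor computation: one must see that the archimedean parameters of $f\otimes g$ give $\mathfrak{C}\asymp k^{2}(1+|t|)^{2}$ rather than the naive $k^{4}(1+|t|)^{4}$, which is what turns convexity into $k^{1/2}(1+|t|)^{1/2}$ and yields the clean $(1+|t|)^{1/2}$ in the statement. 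I do not anticipate a genuine obstacle: unlike in Lemma \ref{lem:Maass}, Ramanujan is known for $f\otimes g$, which is precisely the reason the exponent improves from the minuscule $\delta_{1}$ to a full $(\log k)^{-1+\varepsilon}$.
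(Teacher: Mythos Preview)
Your proposal is correct and follows essentially the same route as the paper: apply the Rankin--Selberg identity \eqref{eqn:RS} with Stirling (giving $\mathcal H(k,t)\asymp 1/k$ for $|t|\le k^{1/2}$), bound $|\zeta(1+2it)|^{-1}\ll\log(1+|t|)$, invoke Soundararajan's original weak subconvexity for the cuspidal $\GL(4)$ $L$-function $L(1/2+it,f\times g)$ (which satisfies Ramanujan by Deligne) with analytic conductor $\asymp k^{2}(1+|t|)^{2}$, and insert \eqref{eqn:HS} for the symmetric-square values at $1$. Your observation about the conductor being $k^{2}(1+|t|)^{2}$ rather than $k^{4}(1+|t|)^{4}$, and your explanation of why the full $(\log k)^{-1+\varepsilon}$ saving is available here but not in Lemma~\ref{lem:Maass}, match the paper exactly.
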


\begin{proof}
  By the theory of Rankin--Selberg \eqref{eqn:RS} and  Stirling's formula, we have
  \[
    \langle E_t F,G \rangle
    \ll_\varepsilon \frac{1}{k^{1/2}}
    \frac{|L(1/2+it,f\times  g)|} {L(1,\sym^2 f)^{1/2} L(1,\sym^2 g)^{1/2} |\zeta(1+2it)|}.
  \]
  It is well known that
  \[ \zeta(1+2it)^{-1} \ll \log (1+|t|).  \]
  Note that ${f}\times g$ is a $\GL(4)$ cuspidal automorphic form which satisfies the generalized Ramanujan conjecture. The analytic conductor of $L(1/2+it,f\times  g) $ is of size $k^2 (1+|t|)^2$. By Soundararajan \cite[Theorem 1]{Soundararajan2010weak}, we have
  \[
    L(1/2+it,f\times  g) \ll \frac{k^{1/2} (1+|t|)^{1/2}}{(\log k)^{1-\varepsilon}}.
  \]
  Together with \eqref{eqn:HS}, we complete the proof.
\end{proof}

Now we are ready to consider the test function $\psi$ as in Theorem \ref{thm:decor}.

\begin{proposition} \label{prop:Sound}
  With the notation and assumptions of Theorem \ref{thm:decor}, we have
  \[
    \langle \psi F, G \rangle
    = O_\varepsilon \left(  M^{3/2} (\log k)^\varepsilon
       \prod_{p\leq k} \left(1 - \frac{\tfrac12 \lambda_f(p^2) + \tfrac12  \lambda_g(p^2)+\delta_1}{p} \right) \right) ,
  \]
  where $\delta_1=9.76563\times 10^{-21}$.
  Furthermore, if we assume GRC for $\GL(2)$ Hecke--Maass cusp forms, then  we can take $\delta_1=1/2$.
\end{proposition}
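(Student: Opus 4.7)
The plan is to apply the Selberg spectral decomposition (Lemma \ref{Lem:SSD}) to $\psi$ and pair the result with $F\bar{G}$, yielding
\[
  \langle \psi F, G\rangle = \frac{3}{\pi}\langle\psi,1\rangle\langle F,G\rangle + \sum_j \langle\psi,\phi_j\rangle\langle\phi_j F,G\rangle + \frac{1}{4\pi}\int_{\mathbb{R}} \langle\psi,E_t\rangle\langle E_t F,G\rangle\,\dd t.
\]
The constant term vanishes by \eqref{eqn:innerproduct} (since $f\neq g$ forces $\langle F,G\rangle=0$), so the task reduces to bounding the Maass cuspidal sum and the Eisenstein integral.

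For the cuspidal sum I first dispose of the tail $t_{\phi_j}>k^{1/2}$, where Lemma \ref{lem:Maass} does not apply: combining the trivial sup-norm bound $|\langle\phi_j F,G\rangle|\leq\|\phi_j\|_\infty\ll t_{\phi_j}^{1/2+\varepsilon}$ with Lemma \ref{lem:psi} at exponent $A$ taken very large renders this tail of size $O(k^{-100})$. For the bulk $t_{\phi_j}\leq k^{1/2}$ I apply a weighted Cauchy--Schwarz at weight $t_{\phi_j}^{(3+\varepsilon)/2}$:
\[
  \sum_{t_{\phi_j}\leq k^{1/2}}|\langle\psi,\phi_j\rangle\langle\phi_j F,G\rangle| \leq \Big(\sum_j|\langle\psi,\phi_j\rangle|^2 t_{\phi_j}^{3+\varepsilon}\Big)^{1/2}\Big(\sum_{t_{\phi_j}\leq k^{1/2}}|\langle\phi_j F,G\rangle|^2 t_{\phi_j}^{-(3+\varepsilon)}\Big)^{1/2}.
\]
The first factor is the cuspidal part of the Sobolev norm $\|\psi\|_{H^{(3+\varepsilon)/2}}$, which by interpolation between $\|\psi\|_{L^2}\ll 1$ and $\|\Delta\psi\|_{L^2}\ll M^2$ (the latter coming from \eqref{eqn:psi_cond} combined with $\Delta=-y^2(\partial_x^2+\partial_y^2)$) is bounded by $M^{(3+\varepsilon)/2}$. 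The second factor, after squaring Lemma \ref{lem:Maass} and invoking Weyl's law, reduces to the logarithmically convergent integral $\int^{k^{1/2}} t^{-(1+\varepsilon)}\,\dd t\ll 1/\varepsilon$, producing a bound $\ll(\log k)^{-\delta_1+\varepsilon}P$ with $P:=\prod_{p\leq k}(1-(\lambda_f(p^2)+\lambda_g(p^2))/(2p))$. Multiplying and absorbing $M^{\varepsilon/2}\leq(\log k)^{\varepsilon/2}$ into the $\varepsilon$, the cuspidal sum is $\ll M^{3/2}(\log k)^{-\delta_1+\varepsilon}P$.

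The Eisenstein integral is handled by direct estimation: for each $t$, bound the integrand via Lemmas \ref{lem:psi} and \ref{lem:Eisenstein}, then integrate over $|t|\leq M$ (with $A=0$) and $|t|>M$ (using the rapid decay $(M/(1+|t|))^A$ with $A$ large). This yields a bound of the shape $M^{15/8+\varepsilon}(\log k)^{-1+\varepsilon}P$. Since $M\leq\log k$ and Lemma \ref{lem:Eisenstein}'s saving $(\log k)^{-1+\varepsilon}$ is much stronger than the cuspidal $(\log k)^{-\delta_1+\varepsilon}$, this contribution is strictly smaller than the cuspidal one.

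Summing the two pieces gives $\langle\psi F,G\rangle\ll M^{3/2}(\log k)^{-\delta_1+\varepsilon}P$. The stated form of the proposition then follows from the Mertens-type identity $\prod_{p\leq k}(1-\delta_1/p)\asymp(\log k)^{-\delta_1}$, which absorbs the extra $(\log k)^{-\delta_1}$ factor into the Euler product with the shifted exponent $\tfrac{1}{2}\lambda_f(p^2)+\tfrac{1}{2}\lambda_g(p^2)+\delta_1$. The main obstacle is calibrating the weight $t_{\phi_j}^{(3+\varepsilon)/2}$ in the cuspidal Cauchy--Schwarz: it must be heavy enough that the second sum is only logarithmically divergent (so that Lemma \ref{lem:Maass}'s $(\log k)^{-\delta_1}$ saving is preserved rather than polluted by a power of $k$), yet light enough that the Sobolev interpolation bound $\|\psi\|_{H^{(3+\varepsilon)/2}}\ll M^{(3+\varepsilon)/2}$ remains essentially $M^{3/2}$.
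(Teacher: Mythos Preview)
Your proof is correct and follows essentially the paper's approach: Selberg decomposition, Lemmas \ref{lem:Maass} and \ref{lem:Eisenstein} for the individual spectral terms, and Cauchy--Schwarz to assemble. The only organizational difference is that the paper first truncates to $t_\phi\leq M(\log k)^\varepsilon$ via Lemma \ref{lem:psi} and then applies \emph{unweighted} Cauchy--Schwarz (Parseval on one side, Weyl's law on the other), so that the factor $M^{3/2}$ emerges from counting cusp forms below $M(\log k)^\varepsilon$, whereas you insert the weight $t_\phi^{(3+\varepsilon)/2}$ and extract $M^{3/2}$ from a Sobolev-norm interpolation instead; for the Eisenstein piece the paper likewise uses Cauchy--Schwarz (getting $M$ rather than your $M^{15/8}$), but as you note the stronger $(\log k)^{-1}$ saving there absorbs the difference.
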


\begin{proof}
  The proof is similar to \cite[Lemma 4.4]{LMR}.
  By the Selberg spectral decomposition (Lemma \ref{Lem:SSD}) we get
  \[
    \langle \psi F, G \rangle
    = \frac{3}{\pi} \langle \psi,1\rangle   \langle  F, G \rangle
    + \sum_{\phi \; \rm cusp} \langle \psi,\phi \rangle  \langle \phi  F, G \rangle
    + \frac{1}{4\pi} \int_{\mathbb{R}} \langle \psi,E_t \rangle \langle E_t   F, G \rangle \dd t.
  \]
  Since $f\neq g$, we have $\langle  F, G \rangle=0$.
  By Watson's formula, the lower bounds for $L$-values on the  $1$-line, and the convexity bounds of $L$-values on the critical line, for $t_\phi\geq k^{\varepsilon}$ and real $|t|\geq k^{\varepsilon}$, we have
  \[
    \langle \phi  F, G \rangle \ll ((k+t_\phi)^4 t_\phi^4)^{1/8+\varepsilon} k^{-1/2}
    \ll t_\phi^{1+\varepsilon}
  \]
  and
  \[
    \langle E_t   F, G \rangle \ll ((k+|t|)^2 (1+|t|)^{2} )^{1/4+\varepsilon} k^{-1/2}
    \ll (1+|t|)^{1+\varepsilon}.
  \]
  By Lemma \ref{lem:psi}, for any $A>0$ we have
  \begin{equation*}
    \sum_{t_\phi \geq k^\varepsilon} |\langle \psi,\phi \rangle  \langle \phi  F, G \rangle |
    + \frac{1}{4\pi} \int_{|t|\geq k^\varepsilon} | \langle \psi,E_t \rangle \langle E_t   F, G \rangle | \dd t  \ll_A k^{-A}.
  \end{equation*}
  For $t_\phi\in [M(\log k)^\varepsilon,k^\varepsilon]$ and real $|t|\in [M(\log k)^\varepsilon,k^\varepsilon]$, by Lemmas \ref{lem:Maass} and \ref{lem:Eisenstein}, we get
  \[
    \langle \phi  F, G \rangle \ll t_\phi^{1/2}  \log k, \quad
    \langle E_t   F, G \rangle \ll (1+|t|)^{1/2} \log k.
  \]
  Hence by Lemma \ref{lem:psi}, for any $A>0$ we get
  \begin{multline*}
    \sum_{t_\phi\in [M(\log k)^\varepsilon,k^\varepsilon]} | \langle \psi,\phi \rangle  \langle \phi  F, G \rangle |
    + \frac{1}{4\pi} \int_{|t|\in [M(\log k)^\varepsilon,k^\varepsilon]}
    | \langle \psi,E_t \rangle \langle E_t   F, G \rangle|  \dd t
    \\
     \ll_A
     \sum_{t_\phi\in [M(\log k)^\varepsilon,k^\varepsilon]} \frac{M^A}{t_\phi^A}   t_\phi^{1/2}  \log k
    + \int_{|t|\in [M(\log k)^\varepsilon,k^\varepsilon]}
    \frac{M^A}{|t|^A} |t|^{1/2}   |t|^{1/2}  \log k \; \dd t.
  \end{multline*}
  It is well known that  $\sum_{T < t_\phi \leq  2T}1 \ll T^2$ (\emph{cf.} \cite[Chap. 15]{IwaniecKowalski2004analytic}). Hence  we obtain
  \begin{align*}
    \sum_{t_\phi\in [M(\log k)^\varepsilon,k^\varepsilon]} & | \langle \psi,\phi \rangle \langle \phi  F, G \rangle |
    + \frac{1}{4\pi} \int_{|t|\in [M(\log k)^\varepsilon,k^\varepsilon]}
    | \langle \psi,E_t \rangle \langle E_t   F, G \rangle|  \dd t \\
    & \ll_A
     M^A (\log k)^3 \max_{M(\log k)^\varepsilon \leq T\leq k^\varepsilon} T^{5/2-A}
    + M^A ( \log k)  \int_{ M(\log k)^\varepsilon}^{k^\varepsilon}
     t^{1-A}   \dd t \\
    & \ll_A
     M^A (\log k)^3  M^{5/2-A} (\log k)^{\varepsilon(5/2-A)}
     \ll_A (\log k)^{\varepsilon(5/2-A) +11/2}.
  \end{align*}
  Here we take $A\geq3$ being sufficiently large and we have used the assumption $M\leq \log k$. Hence the contribution from those terms is $O_{\varepsilon, A}((\log k)^{-A})$ for any $A>0$.  So we have
  \begin{equation}\label{eqn:<psiFG><<}
    \langle \psi F, G \rangle
    =    \sum_{t_\phi \leq M (\log k)^\varepsilon} \langle \psi,\phi \rangle  \langle \phi  F, G \rangle
    + \frac{1}{4\pi} \int_{ |t| \leq M (\log k)^\varepsilon } \langle \psi,E_t \rangle \langle E_t   F, G \rangle \dd t + O_{\varepsilon, A}((\log k)^{-A}).
  \end{equation}

  Parseval's identity gives
  \[
    \|\psi\|_2^2 = \frac{3}{\pi} |\langle \psi,1 \rangle|^2
    + \sum_{\phi} | \langle \psi, \phi\rangle |^2
    + \frac{1}{4\pi} \int_{\mathbb{R}} | \langle \psi, E_t \rangle|^2 \dd t.
  \]
  By \eqref{eqn:psi_cond} we have $\|\psi\|_2^2 = \int_{\mathbb{X}}|\psi(z)|^2 \dd\mu z \ll 1$. For Maass cusp forms, by Cauchy--Schwarz and Lemma \ref{lem:Maass}, we get
  \begin{equation}\label{eqn:<psiFG><<Maass}
     \begin{split}
     \sum_{t_\phi \leq M (\log k)^\varepsilon} & \langle \psi,\phi \rangle \langle \phi  F, G \rangle
     \ll \left( \sum_{t_\phi \leq M (\log k)^\varepsilon} |\langle \psi,\phi \rangle|^2 \right)^{1/2}
     \left( \sum_{t_\phi \leq M (\log k)^\varepsilon} |\langle \phi  F, G \rangle|^2 \right)^{1/2}
     \\
     &\ll \|\psi\|_2 \left( \sum_{t_\phi \leq M (\log k)^\varepsilon}
    t_\phi (\log k)^{-2\delta_1+\varepsilon}
       \prod_{p\leq k} \left(1 - \frac{  \lambda_f(p^2) +   \lambda_g(p^2)}{p} \right) \right)^{1/2} \\
    & \ll M^{3/2} (\log k)^\varepsilon
       \prod_{p\leq k} \left(1 - \frac{\tfrac12 \lambda_f(p^2) + \tfrac12  \lambda_g(p^2)+\delta_1}{p} \right).
    \end{split}
  \end{equation}
  Here we have used the fact $\prod_{p\leq k} (1-\frac{\delta}{p}) \asymp (\log k)^{-\delta}$ for any $\delta\in\mathbb{R}$.

  For Eisenstein series,  by Cauchy--Schwarz and Lemma \ref{lem:Eisenstein}, we get
  \begin{equation}\label{eqn:<psiFG><<Eisenstein}
    \begin{split}
     & \hskip -0.5cm \int_{ |t| \leq M (\log k)^\varepsilon } \langle \psi,E_t \rangle \langle E_t   F, G \rangle \dd t \\
     & \ll
     \left( \int_{ - M (\log k)^\varepsilon }^{M (\log k)^\varepsilon } |\langle \psi,E_t \rangle|^2  \dd t \right)^{1/2}
     \left(  \int_{ - M (\log k)^\varepsilon }^{M (\log k)^\varepsilon } | \langle E_t   F, G \rangle|^2  \dd t \right)^{1/2}
     \\
    &\ll \|\psi\|_2 \left(  \int_{ - M (\log k)^\varepsilon }^{M (\log k)^\varepsilon } (1+|t|)  \dd t \right)^{1/2}  (\log k)^{-1+\varepsilon}
       \prod_{p\leq k} \left(1 - \frac{\tfrac12 \lambda_f(p^2) + \tfrac12  \lambda_g(p^2)}{p} \right) \\
    &\ll M (\log k)^\varepsilon
       \prod_{p\leq k} \left(1 - \frac{\tfrac12 \lambda_f(p^2) + \tfrac12  \lambda_g(p^2)+1}{p} \right).
    \end{split}
  \end{equation}
  Combining \eqref{eqn:<psiFG><<}, \eqref{eqn:<psiFG><<Maass}, and  \eqref{eqn:<psiFG><<Eisenstein}, we complete the proof.
\end{proof}

\section{Holowinsky's approach} \label{sec:Holowinsky}

In this section, we apply a variant of Holowinsky's method \cite{Holowinsky} to deal with the inner product $\langle \psi F, G \rangle$.
The general strategy is to expand $\psi$ into a linear combination of incomplete Poincar\'e series. We will also use ideas from Lester--Matom\"aki--Radziwi{\l\l} \cite{LMR}.

Let $\mathcal{F}$ be the standard fundamental domain of $\Gamma\backslash \mathbb{H}$.
Let $\check{\psi}$ be the function on $\mathbb{H}$ to $\mathbb{C}$ such that $\check{\psi}(z)=\psi(z)$ if $z\in \mathcal{F}$ and $\check\psi(z)=0$ if $z\notin\mathcal{F}$.
Let $\check\Psi$ be the extension of $\check\psi$ to $\mathbb{H}$ by $\Gamma_\infty=\{ (\begin{smallmatrix}  1 & n \\  0 & 1 \end{smallmatrix}):n\in \mathbb{Z}\}$ periodicity.
Define
\[
  \Psi_m(y) = \int_{-1/2}^{1/2} \check\Psi(x+iy) e(-mx) \dd x.
\]
We have the usual Fourier expansion of $\check\Psi$
\[
  \check\Psi(x+iy) = \sum_{m\in \mathbb{Z}} \Psi_m(y) e(mx).
\]
Note that
\[
  \psi(z) = \sum_{\gamma\in\Gamma_\infty\backslash \Gamma} \check\Psi(\gamma z).
\]
For $\Psi\in \mathcal{C}^{\infty}(\mathbb{R})$, we define the incomplete Poincar\'e series
\[
  P_n(z,\Psi) = \sum_{\gamma\in\Gamma_\infty\backslash \Gamma} e(n \Re(\gamma z)) \Psi(\Im(\gamma z)).
\]
Thus we have
\[
  \psi(z)= \sum_{\gamma\in\Gamma_\infty\backslash \Gamma}
  \sum_{m\in \mathbb{Z}} \Psi_m( \Im \gamma z) e(m \Re \gamma z)
  = \sum_{m\in\mathbb{Z}} P_m(z,\Psi_m).
\]
Hence we obtain
\begin{equation}\label{eqn:<psiFG>=PS}
  \langle \psi F, G \rangle = \sum_{m\in\mathbb{Z}}  \langle P_m(\cdot,\Psi_m) F, G \rangle .
\end{equation}

Note that for any $z\in \mathcal{F}$, we have $\Im z\geq \sqrt{3}/2$.
By \eqref{eqn:psi_cond} and integration by parts, we have $\Psi_m(y) \ll_A (M/|m|)^{A}$.
Hence by unfolding, for $|m|\geq1$, we have
\begin{align*}
  \langle P_m(\cdot,\Psi_m) F, G \rangle
  & = \int_{0}^{\infty} \int_{-1/2}^{1/2} y^k f(z) \bar{g}(z) e(mx) \Psi_m(y) \frac{\dd x\dd y }{y^2} \\
  & \ll_A \int_{\sqrt{3}/2}^{\infty} \int_{-1/2}^{1/2} y^k |f(z)| |g(z)|  \frac{\dd x\dd y }{y^2} \left(\frac{M}{|m|}\right)^A .
\end{align*}
By the Cauchy--Schwarz inequality, we get
\begin{align*}
  \langle P_m(\cdot,\Psi_m) F, G \rangle
  & \ll_A \int_{\sqrt{3}/2}^{\infty} \int_{-1/2}^{1/2} y^k (|f(z)|^2 + |g(z)|^2)  \frac{\dd x\dd y }{y^2} \left(\frac{M}{|m|}\right)^A
  \ll_A \left(\frac{M}{|m|}\right)^A.
\end{align*}
Hence for any $A>1$, we have
\begin{align*}
  \sum_{|m|\geq M (\log k)^\varepsilon}  \langle P_m(\cdot,\Psi_m) F, G \rangle
  & \ll_A M^A \sum_{m\geq M (\log k)^\varepsilon}  m^{-A} \\
  & \ll_A M^A (M(\log k)^\varepsilon)^{1-A} \ll_A M (\log k)^{(1-A)\varepsilon}.
\end{align*}
Since we assume $M\leq \log k$, the above is $O_{\varepsilon,A}((\log k)^{-A})$ for any $A>0$.
By \eqref{eqn:<psiFG>=PS} we get
\begin{equation}\label{eqn:<psiFG><<PS}
  \langle \psi F, G \rangle = \sum_{|m|\leq M (\log k)^\varepsilon}  \langle P_m(\cdot,\Psi_m) F, G \rangle + O_{\varepsilon,A}((\log k)^{-A}).
\end{equation}
We still need to estimate $\langle P_m(\cdot,\Psi_m) F, G \rangle $ for $|m|\leq M (\log k)^\varepsilon$. We first consider the case $m\neq0$ and we have the following lemma.

\begin{lemma}\label{lemma:Poincare}
  Let $1\leq M\leq \log k$. Let $1\leq |m| \leq  M (\log k)^\varepsilon$.
  Let $\Psi:\mathbb{R}\rightarrow\mathbb{C}$ be a smooth compactly supported function such that $y^\ell \Psi^{(\ell)}(y) \ll_\ell M^\ell$ for all $\ell\in\mathbb{Z}_{\geq0}$ and $\supp \Psi\subset [1/2,B M]$ for some absolute constant $B>1$.
  Then we have
  \[
    \langle P_m(\cdot,\Psi) F, G \rangle \ll_{\varepsilon} (\log k)^\varepsilon
    \prod_{p\leq k} \left( 1-\frac{\tfrac12 (|\lambda_f(p)|-1)^2+\tfrac12 (|\lambda_g(p)|-1)^2}{p} \right).
  \]
\end{lemma}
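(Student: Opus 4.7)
The plan is to apply the standard unfolding trick for the incomplete Poincar\'e series and reduce the statement to a Holowinsky-type shifted convolution estimate. Unfolding against the $\Gamma_\infty$-invariant integrand gives
\begin{equation*}
\langle P_m(\cdot,\Psi)F,G\rangle = \int_0^\infty \int_{-1/2}^{1/2} y^{k-2}f(z)\overline{g(z)}\,e(mx)\Psi(y)\,dx\,dy.
\end{equation*}
Inserting the Fourier expansions of $f$ and $\overline g$ and integrating in $x$ picks out the diagonal $n'=n+m$ (assume $m\geq 1$; the case $m\leq -1$ is symmetric), leaving
\begin{equation*}
\langle P_m(\cdot,\Psi)F,G\rangle = a_f(1)\overline{a_g(1)}\sum_{n\geq 1}\lambda_f(n)\lambda_g(n+m)\bigl(n(n+m)\bigr)^{(k-1)/2}I(n,m),
\end{equation*}
where $I(n,m)=\int_0^\infty y^{k-2}e^{-2\pi(2n+m)y}\Psi(y)\,dy$.

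Next I would analyze the arithmetic weight by combining \eqref{eqn:firstcoeff} with Stirling. Using $4n(n+m)=(2n+m)^2-m^2$, the combined factor $|a_f(1)\overline{a_g(1)}|(n(n+m))^{(k-1)/2}I(n,m)$ reduces to a constant times $(L(1,\sym^2 f)L(1,\sym^2 g))^{-1/2}\cdot k^{-1}$ multiplied by a smooth weight $W(n,m)$. The condition $\supp\Psi\subset[1/2,BM]$, Laplace's method applied to $I(n,m)$ (whose integrand peaks at $y_*=(k-2)/(2\pi(2n+m))$), and repeated integration by parts for $n$ outside the effective range show that $W(n,m)$ is essentially supported on $n\asymp k/M$ up to $O(k^{-A})$ tails, while the residual multiplier $(1-m^2/(2n+m)^2)^{(k-1)/2}$ is $1+o(1)$ throughout that range since $|m|\leq M(\log k)^\varepsilon$. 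Putting in absolute values yields
\begin{equation*}
|\langle P_m(\cdot,\Psi)F,G\rangle|\ll \frac{1}{k\sqrt{L(1,\sym^2 f)L(1,\sym^2 g)}}\sum_{n\leq Ck}|\lambda_f(n)||\lambda_g(n+m)|+O(k^{-A}).
\end{equation*}

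The final step would apply the two-form version of Holowinsky's sieve bound,
\begin{equation*}
\sum_{n\leq N}|\lambda_f(n)||\lambda_g(n+m)|\ll_\varepsilon N(\log N)^{-2+\varepsilon}\prod_{p\leq N}\Bigl(1+\tfrac{|\lambda_f(p)|}{p}\Bigr)\Bigl(1+\tfrac{|\lambda_g(p)|}{p}\Bigr),
\end{equation*}
together with \eqref{eqn:HS} for $L(1,\sym^2 f)^{-1/2}L(1,\sym^2 g)^{-1/2}$. A direct Taylor expansion, using the Hecke relation $\lambda_f(p^2)=|\lambda_f(p)|^2-1$, gives
\begin{equation*}
\Bigl(1-\tfrac{1}{p}\Bigr)\Bigl(1+\tfrac{|\lambda_f(p)|}{p}\Bigr)\Bigl(1-\tfrac{\lambda_f(p^2)}{p}\Bigr)^{1/2}=1-\tfrac{(|\lambda_f(p)|-1)^2}{2p}+O(p^{-2}),
\end{equation*}
and similarly for $g$, so that the two factors of $\prod_{p\leq k}(1-1/p)\asymp(\log k)^{-1}$ cancel the $(\log N)^{-2+\varepsilon}$ and produce the claimed Euler product, with the extra factors of $(\log\log k)^3$ from \eqref{eqn:HS} absorbed into $(\log k)^\varepsilon$.

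The main obstacle is the shifted convolution bound above. In the diagonal case $f=g$ the sum $\sum|\lambda_f(n)|^2$ is controlled directly by Rankin--Selberg; for $f\neq g$ one instead invokes Shiu's (and Nair's) theorem on means of the nonnegative multiplicative function $n\mapsto|\lambda_f(n)|$ together with a sieve decomposition in the shift $m$, handling the contribution of $n$ with $\gcd(n,n+m)$ large or $n+m$ exceptionally smooth by standard divisor estimates, following the template of \cite[\S3]{HS}.
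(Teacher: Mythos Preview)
Your proposal is correct and follows essentially the same route as the paper: unfold, insert Fourier expansions, reduce the $y$-integral to a smooth weight localizing $n\ll k$ (the paper does this via Mellin inversion \`a la Luo--Sarnak rather than Laplace, but the output is the same), bound by the shifted convolution sum, and combine Holowinsky's sieve with \eqref{eqn:HS} and the Hecke relation to obtain the stated Euler product. The ``main obstacle'' you flag is not an obstacle at all---the required bound $\sum_{n\leq N}|\lambda_f(n)||\lambda_g(n+m)|\ll N(\log N)^{\varepsilon-2}\prod_{p\leq N}(1+|\lambda_f(p)|/p)(1+|\lambda_g(p)|/p)$ for distinct $f,g$ is precisely \cite[Theorem~1.2]{Holowinsky}, stated and proved there in that generality, so you may cite it directly rather than rederive it from Shiu/Nair.
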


\begin{proof}
  By the unfolding method, we have
  \[
    \langle P_m(\cdot,\Psi) F, G \rangle
    = \int_{0}^{\infty} \int_{-1/2}^{1/2} y^k f(z) \bar{g}(z) \Psi(y) e(mx) \frac{\dd x\dd y}{y^2}.
  \]
  By the Fourier expansions, we get
  \begin{align*}
    \langle P_m(\cdot,\Psi) F, G \rangle
    & = \int_{0}^{\infty} \int_{-1/2}^{1/2} y^k  a_f(1) \sum_{n_1\geq1} \lambda_f(n_1) n_1^{\frac{k-1}{2}} e(n_1 x) \\
    & \qquad \cdot
    \overline{a_g(1)} \sum_{n_2\geq1} \overline{\lambda_g(n_2)} n_2^{\frac{k-1}{2}} e(-n_2 x) \exp(-2\pi (n_1+n_2) y)
     \Psi(y) e(mx) \frac{\dd x\dd y}{y^2} \\
    & = a_f(1)\overline{a_g(1)} \sum_{\substack{n\geq1 \\ n+m\geq1}}  \lambda_f(n) \lambda_g(n+m)  n^{\frac{k-1}{2}} (n+m)^{\frac{k-1}{2}} \\
    & \hskip 90pt  \cdot \int_{0}^{\infty}   y^{k-2}  \exp(-2\pi (2n+m) y)
     \Psi(y)   \dd y .
  \end{align*}
  Now we will use the method in Luo--Sarnak \cite[\S2]{LS} to deal with the above sum and integral. Define the Mellin transform of $\Psi$ as
  \[
    \tilde\Psi(s) = \int_{0}^{\infty} \Psi(y) y^{s-1} \dd y.
  \]
  Then $\tilde\Psi(s)$ is entire as $\Psi\in \mathcal{C}_c^\infty(\mathbb{R}_{>0})$, and for any $\ell\in\mathbb{Z}_{\geq0}$ and $\Re(s)<0$ we have $\tilde\Psi(s) \ll_\ell \frac{M^\ell}{(1+|s|)^\ell}$ by integration by parts $\ell$  times. Hence for $\Re(s)<0$ we have
  \begin{equation}\label{eqn:tildePsi<<}
    \tilde\Psi(s) \ll_A \left( \frac{M}{1+|s|}\right)^{A}, \quad \textrm{ for any $A\geq0$.}
  \end{equation}
  Mellin inversion gives us
  \[
    \Psi(x) = \frac{1}{2\pi i} \int_{(-2)} \tilde{\Psi}(s) x^{-s} \dd s
    =\frac{1}{2\pi i} \int_{(2)} \tilde{\Psi}(-s) x^{s} \dd s, \quad x>0.
  \]
  Hence
  \[
    \int_{0}^{\infty}   y^{k-2}  \exp(-2\pi (2n+m) y)
     \Psi(y)   \dd y
     = \frac{1}{2\pi i} \int_{(2)} \tilde{\Psi}(-s) \int_{0}^{\infty}    y^{k+s-2}  \exp(-2\pi (2n+m) y) \dd y \dd s.
  \]
  Note that $\Gamma(s)=\int_{0}^{\infty} x^{s-1} \exp(-x) \dd x$, $\Re(s)>0$. We get
  \begin{equation}\label{eqn:integral}
    \int_{0}^{\infty}   y^{k-2}  \exp(-2\pi (2n+m) y)
     \Psi(y)   \dd y
     = \frac{1}{2\pi i} \int_{(2)} \tilde{\Psi}(-s) \frac{\Gamma(s+k-1)}{(2\pi (2n+m))^{s+k-1}} \dd s.
  \end{equation}
  Hence
  \begin{align*}
    \langle P_m(\cdot,\Psi) F, G \rangle
    & = a_f(1)\overline{a_g(1)} \frac{\Gamma(k-1)}{(4\pi)^{k-1}} \sum_{\substack{n\geq1 \\ n+m\geq1}}  \lambda_f(n) \lambda_g(n+m)  \frac{(n(n+m))^{\frac{k-1}{2}}}{(n+m/2)^{k-1}}  \\
    & \hskip 90pt  \cdot \frac{1}{2\pi i} \int_{(2)} \tilde{\Psi}(-s) \frac{\Gamma(s+k-1)}{\Gamma(k-1)} \frac{1}{(4\pi (n+m/2))^{s}} \dd s.
  \end{align*}
  By Stirling's formula, for $0<a\leq \Re(s)\leq b$ we have (see \cite[Eq. (2.3)]{LS})
  \[
    \frac{\Gamma(s+k-1)}{\Gamma(k-1)}
    = (k-1)^s \left( 1+O_{a,b}\Big(\frac{(|1+|s|)^2}{k}\Big)\right).
  \]
  By moving the contour to $\Re(s)=b$ with sufficiently large $b>0$, the contribution from $n\geq k^{1+\varepsilon}$ is bounded by
  \begin{multline*}
    O_{b} \left( k^{-1+\varepsilon} \sum_{n\geq k^{1+\varepsilon}} n^\varepsilon \int_{(b)} \frac{M^4}{(1+|s|)^4}  k^b \left( 1+\frac{(|1+|s|)^2}{k}\right) n^{-b} |\dd s|\right) \\
    = O_b \left( k^{-1+\varepsilon} k^2  k^{\varepsilon (2-b)} M^4 \right)
    = O_{\varepsilon,A} (k^{-A}),
  \end{multline*}
  for any $A>0$. Here we have used $M\ll \log k$. Hence we obtain
  \begin{align*}
    \langle P_m(\cdot,\Psi) F, G \rangle
    & = a_f(1)\overline{a_g(1)} \frac{\Gamma(k-1)}{(4\pi)^{k-1}}
    \sum_{\max(1,1-m)\leq n\leq k^{1+\varepsilon}}  \lambda_f(n) \lambda_g(n+m)  \left( \frac{\sqrt{n(n+m)}}{n+m/2}\right)^{k-1}  \\
    & \hskip 10pt  \cdot \frac{1}{2\pi i} \int_{(\varepsilon)} \tilde{\Psi}(-s)  \left( 1+O_{\varepsilon}\Big(\frac{(|1+|s|)^2}{k}\Big)\right) \frac{(k-1)^s}{(4\pi (n+m/2))^{s}} \dd s + O_{\varepsilon,A} (k^{-A}).
  \end{align*}
  For $\max(1,1-m)\leq n$, by the Cauchy--Schwarz inequality we have $\sqrt{n(n+m)} \leq n+m/2$.
  By \eqref{eqn:firstcoeff} the contribution from the error terms in Gamma functions is
  bounded by
  \[
    O_\varepsilon \left( k^{-1+\varepsilon} \sum_{n\leq k^{1+\varepsilon}} n^\varepsilon \int_{(\varepsilon)}  \frac{M^4}{(1+|s|)^4} \frac{(|1+|s|)^2}{k} k^\varepsilon |\dd s|\right) = O_\varepsilon \left( k^{-1+\varepsilon} M^4 \right)
    = O_\varepsilon \left( k^{-1+\varepsilon} \right).
  \]
  Note that
  \[ \frac{1}{2\pi i} \int_{(\varepsilon)} \tilde{\Psi}(-s)  \frac{(k-1)^s}{(4\pi (n+m/2))^{s}} \dd s = \Psi \left( \frac{k-1}{4\pi (n+m/2)} \right).\]
  By the assumptions of this lemma, we only need to consider  $k^{1-\varepsilon} \ll n\ll k$ and $m\ll k^\varepsilon$, in which case we have
  \[  \left( \frac{\sqrt{n(n+m)}}{n+m/2}\right)^{k-1}= 1+O(k^{-1+\varepsilon}). \]
  Thus we get
  \begin{align*}
    \langle P_m(\cdot,\Psi) F, G \rangle
    & = a_f(1)\overline{a_g(1)} \frac{\Gamma(k-1)}{(4\pi)^{k-1}}    \\
    & \hskip 10pt  \cdot
    \sum_{\max(1,1-m)\leq n\leq k^{1+\varepsilon}}  \lambda_f(n) \lambda_g(n+m)
    \Psi \left( \frac{k-1}{4\pi (n+m/2)} \right) + O_{\varepsilon} (k^{-1+\varepsilon}).
  \end{align*}
  Note that $\supp \Psi\subset [1/2,B M]$. Together with \eqref{eqn:firstcoeff} we have
  \begin{align*}
    \langle P_m(\cdot,\Psi) F, G \rangle
    & \leq   \frac{2\pi^2}{k L(1,\sym^2 f)^{1/2} L(1,\sym^2 g)^{1/2}}
    \sum_{n\leq k }  |\lambda_f(n) \lambda_g(n+m)|  + O_{\varepsilon} (k^{-1+\varepsilon}).
  \end{align*}
  By Holowinsky \cite[Theorem 1.2]{Holowinsky} we get
  \[
     \sum_{n\leq k }  |\lambda_f(n) \lambda_g(n+m)|
     \ll k (\log k)^{\varepsilon-2} \prod_{p\leq \exp(\frac{\log k}{\varepsilon \log\log k})} \left(1+\frac{|\lambda_f(p)|}{p} \right) \left(1+\frac{|\lambda_g(p)|}{p} \right).
  \]
  By Degline's bounds and Mertens' theorem we know
  $$\prod_{ \exp(\frac{\log k}{\varepsilon \log\log k}) < p \leq k} \left(1+\frac{|\lambda_f(p)|}{p} \right) \leq \prod_{ \exp(\frac{\log k}{\varepsilon \log\log k}) < p \leq k} \left(1+\frac{2}{p} \right) \ll (\log\log k)^2.$$
  Hence we obtain
  \begin{equation}\label{eqn:Holowinsky}
     \sum_{n\leq k }  |\lambda_f(n) \lambda_g(n+m)|
     \ll k (\log k)^{\varepsilon} \prod_{p\leq k}
     \left(1+\frac{|\lambda_f(p)|-1}{p} \right) \left(1+\frac{|\lambda_g(p)|-1}{p} \right),
  \end{equation}
  and together with \eqref{eqn:HS} we have
  \begin{align*}
    \langle P_m(\cdot,\Psi) F, G \rangle
    & \leq  (\log k)^\varepsilon
    \prod_{p\leq k} \left( 1-\frac{\tfrac12 (|\lambda_f(p)|-1)^2+\tfrac12 (|\lambda_g(p)|-1)^2}{p} \right)
    + O_{\varepsilon} (k^{-1+\varepsilon}).
  \end{align*}
  This proves Lemma \ref{lemma:Poincare}.
\end{proof}

For $m=0$, $P_0(z,\Psi)=E(z|\Psi)$ is also called the incomplete Eisenstein series.
We write the Fourier expansion  as
\[
  E(z|\Psi) = a_{\Psi,0}(y) + \sum_{|\ell|\geq1} a_{\Psi,\ell}(y) e(\ell x).
\]
As in \cite[Lemma 4.6]{LMR}, we have the following lemma.

\begin{lemma}\label{lemma:FC}
  Let $M\geq1$.
  Let $\Psi:\mathbb{R}\rightarrow\mathbb{C}$ be a smooth compactly supported function such that $v^j \Psi^{(j)}(v) \ll_j M^j$ for all $j\in\mathbb{Z}_{\geq0}$ and $\supp \Psi\subset [1/2,B M]$ for some absolute constant $B>1$.
  Then for all $y>0$ we have
  \[
    a_{\Psi,0}(y) = \frac{3}{\pi} \tilde\Psi(-1) + \frac{1}{2\pi} \int_{\mathbb{R}} \tilde\Psi(-1/2-it)
    \left( y^{1/2+it} + \frac{\theta(1/2-it)}{\theta(1/2+it)} y^{1/2-it} \right) \dd t
  \]
  and for $\ell\neq0$,
  \[
    a_{\Psi,\ell}(y) \ll_{\varepsilon,A} \sqrt{y} \tau(|\ell|)  M^{2/3+\varepsilon}
    \left(\frac{M}{|\ell y|}\right)^{A} \left(1+\frac{1}{|\ell y|}\right)^\varepsilon ,
  \]
  for any $\varepsilon>0$ and $A\geq 0$.
\end{lemma}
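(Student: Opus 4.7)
The plan is to compute the Fourier coefficients of $E(z|\Psi)=P_0(z,\Psi)$ through Mellin inversion and contour shifting against the real-analytic Eisenstein series. Since $\Psi$ is compactly supported in $\mathbb{R}_{>0}$, its Mellin transform $\tilde\Psi$ is entire and satisfies the rapid decay bound \eqref{eqn:tildePsi<<}. Mellin inversion gives $\Psi(y)=\frac{1}{2\pi i}\int_{(2)}\tilde\Psi(-s)y^s\,\dd s$, and summing over $\Gamma_\infty\backslash\Gamma$ (absolute convergence on $\Re(s)=2$) yields the integral representation
\[
  P_0(z,\Psi)=\frac{1}{2\pi i}\int_{(2)}\tilde\Psi(-s)E(z,s)\,\dd s.
\]
Reading off the $\ell$-th Fourier coefficient from \eqref{eqn:FE-ES} exchanges the $x$-integral with the $s$-integral.

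For $\ell=0$, the integrand $\tilde\Psi(-s)\bigl(y^s+\frac{\theta(1-s)}{\theta(s)}y^{1-s}\bigr)$ is meromorphic with a single simple pole at $s=1$ arising from $\theta(1-s)/\theta(s)$, whose residue equals $3/\pi$ (the standard residue of $E(z,s)$). Shifting the contour from $\Re(s)=2$ to $\Re(s)=1/2$ and parametrizing $s=1/2+it$ then yields the claimed formula $a_{\Psi,0}(y)=\tfrac{3}{\pi}\tilde\Psi(-1)+\tfrac{1}{2\pi}\int_\mathbb{R}\tilde\Psi(-1/2-it)(y^{1/2+it}+\tfrac{\theta(1/2-it)}{\theta(1/2+it)}y^{1/2-it})\dd t$. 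For $\ell\neq 0$, the integrand is entire in $s$, so the same contour shift introduces no residue and
\[
  a_{\Psi,\ell}(y)=\frac{\sqrt{y}}{\pi}\int_{\mathbb{R}}\tilde\Psi(-1/2-it)\,\frac{\tau_{it}(|\ell|)}{\theta(1/2+it)}\,K_{it}(2\pi|\ell|y)\,\dd t.
\]

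To establish the stated bound I would use: the rapid decay $|\tilde\Psi(-1/2-it)|\ll_A (M/(1+|t|))^A$; the trivial bound $|\tau_{it}(|\ell|)|\leq\tau(|\ell|)$; the Stirling estimate together with $\zeta(1+2it)^{-1}\ll\log(2+|t|)$ to obtain $|\theta(1/2+it)|^{-1}\ll e^{\pi|t|/2}(1+|t|)^{\varepsilon}$; and uniform Bessel bounds. The latter split into three regimes for $x=2\pi|\ell|y$: exponential decay $K_{it}(x)\ll e^{-\pi|t|/2}e^{-x/2}$ when $x\gg 1+|t|$, Airy-type decay $K_{it}(x)\ll e^{-\pi|t|/2}(1+|t|)^{-1/3}$ in the transition $x\asymp|t|$, and oscillatory decay $(|t|^2-x^2)^{-1/4}e^{-\pi|t|/2}$ for $x\ll|t|$; together with a logarithmic bound for $x\ll 1$ which produces the $(1+1/|\ell y|)^\varepsilon$ factor. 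The exponentials $e^{\pi|t|/2}$ cancel, and the $\tilde\Psi$-decay effectively truncates the integral at $|t|\lesssim M$, so that for $|\ell|y\gtrsim M$ the exponential Bessel decay delivers arbitrary polynomial savings $(M/|\ell y|)^A$, while the residual contribution from the Airy/oscillatory regions combined with the truncation length $\asymp M$ is responsible for the factor $M^{2/3+\varepsilon}$. The main obstacle is the careful balancing of these three Bessel regimes against the $\tilde\Psi$ decay to extract the correct $M^{2/3}$ exponent; this is precisely the analysis carried out in \cite[Lemma 4.6]{LMR} in the case $M=1$, and the present lemma is the scaled version, with the $M$-dependence tracked uniformly throughout.
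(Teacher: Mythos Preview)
Your proposal is correct and follows essentially the same approach as the paper: Mellin inversion to write $E(z|\Psi)=\frac{1}{2\pi i}\int_{(2)}\tilde\Psi(-s)E(z,s)\,\dd s$, contour shift to $\Re(s)=1/2$ picking up the residue $\tfrac{3}{\pi}\tilde\Psi(-1)$ at $s=1$ for the constant term, and then Bessel estimates balanced against the decay of $\tilde\Psi$ to bound the nonzero coefficients. The paper organizes the Bessel analysis into two cases ($u=2\pi|\ell|y\ll1$ versus $u\gg1$) using the packaged bounds $K_{it}(u)\ll\min(u^{-1/2}e^{-u},|t|^{-1/3}e^{-\pi|t|/2})$ and $K_{it}(u)\ll|\Gamma(1/2+it)|(1+|t|)^{A+\varepsilon}u^{-A}(1+u^{-1})^\varepsilon$ rather than your three-regime split, but this is only a difference in bookkeeping; both routes yield the same $M^{2/3+\varepsilon}$ factor and $(M/|\ell y|)^A$ saving.
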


\begin{proof}
  By Mellin inversion we have $E(z|\Psi) = \frac{1}{2\pi i} \int_{(2)} \tilde\Psi(-s) E(z,s) \dd s$. By \eqref{eqn:FE-ES}, we have
  \[
    a_{\Psi,0}(y) = \frac{1}{2\pi i} \int_{(2)} \tilde\Psi(-s)
    \left( y^s + \frac{\theta(1-s)}{\theta(s)} y^{1-s} \right) \dd s
  \]
  and  for $\ell\neq0$,
  \[
    a_{\Psi,\ell}(y) = \frac{1}{2\pi i} \int_{(2)} \tilde\Psi(-s)
   \frac{2\sqrt{y}}{\theta(s)}  \tau_{s-1/2}(|\ell|) K_{s-1/2}(2\pi |\ell|y)  \dd s.
  \]
  By shifting the contour to $\Re(s)=1/2$, we get
  \[
    a_{\Psi,0}(y) = \frac{3}{\pi} \tilde\Psi(-1) + \frac{1}{2\pi} \int_{\mathbb{R}} \tilde\Psi(-1/2-it)
    \left( y^{1/2+it} + \frac{\theta(1/2-it)}{\theta(1/2+it)} y^{1/2-it} \right) \dd t
  \]
  and  for $\ell\neq0$,
  \begin{align}\label{eqn:a_ell}
    a_{\Psi,\ell}(y)
    &  = \frac{\sqrt{y}}{\pi} \int_{\mathbb{R}} \tilde\Psi(-1/2-it)
   \frac{1}{\theta(1/2+it)}  \tau_{it}(|\ell|) K_{it}(2\pi |\ell|y)  \dd t \nonumber \\
   & \ll  \sqrt{y} \tau (|\ell|) \int_{\mathbb{R}} \Big|\tilde\Psi(-1/2-it)
   \frac{1}{\theta(1/2+it)}  K_{it}(2\pi |\ell|y) \Big|  \dd t.
  \end{align}
  Here $\frac{3}{\pi} \tilde\Psi(-1)$ comes from the residue at $s=1$.
%

  Note that for $t\gg1$ we have  $\theta(1/2+it) = \pi^{-1/2-it} \Gamma(1/2+it) \zeta(1+2it) \gg e^{-\pi|t|/2} (\log (1+|t|))^{-1}$.
  By  Balogh \cite{balogh1967asymptotic} (see also \cite[Eq. (4.7)]{LMR}), for $u>0$ and $t\in\mathbb{R}$ we have
  \begin{equation}\label{eqn:K<<1}
    K_{it}(u) \ll  u^{-1/2} e^{-u},
  \end{equation}
  and
  \begin{equation}\label{eqn:K<<2}
    K_{it}(u) \ll   |t|^{-1/3} e^{-\pi |t|/2}.
  \end{equation}
  By Holowinsky \cite[P. 1505]{Holowinsky}, for $u>0$ and $t\in\mathbb{R}$ we have
  \begin{equation}\label{eqn:K<<3}
    K_{it}(u) \ll |\Gamma(1/2+it)| (1+|t|)^{A+\varepsilon} u^{-A} (1+u^{-1})^\varepsilon,
  \end{equation}
  for any $\varepsilon>0$ and $A\geq0$.

  If $0< u=4\pi |\ell|y \ll1$, then by using \eqref{eqn:K<<3} in \eqref{eqn:a_ell} when $t\ll1$ and using \eqref{eqn:K<<2} when $t\gg1$, we obtain
  \begin{align*}
    a_{\Psi,\ell}(y)
    & \ll  \sqrt{y} \tau (|\ell|)
    \Big( \int_{t\ll1} (1+u^{-1})^\varepsilon \dd t + \int_{t\gg1} \frac{M^{2/3+\varepsilon}}{|t|^{2/3+\varepsilon}} t^{-1/3} \dd t \Big) \\
    &\ll \sqrt{y} \tau (|\ell|)  M^{2/3+\varepsilon} (1+u^{-1})^\varepsilon.
  \end{align*}
  If $u\gg1$, then by using \eqref{eqn:K<<3} in \eqref{eqn:a_ell} when $t\ll1$, using \eqref{eqn:K<<1} when $1\ll |t|\leq u/5$, and using \eqref{eqn:K<<2} when $|t|\geq u/5$,  we obtain
  \begin{align*}
    a_{\Psi,\ell}(y) &  \ll  \sqrt{y} \tau (|\ell|)
    \Big( \int_{t\ll1} u^{-A} \dd t
    + \int_{1\ll |t|\leq u/5} e^{\pi |t|/2} u^{-1/2} e^{-u} \dd t
    + \int_{|t|\geq u/5} \frac{M^A}{|t|^A} t^{-1/3} \dd t \Big)  \\
     & \ll \sqrt{y} \tau (|\ell|) \frac{M^A}{u^{A-2/3}} , \quad  A>2/3.
  \end{align*}
  This proves the lemma.
\end{proof}

\begin{remark}\label{rmk:a0}
  By \eqref{eqn:tildePsi<<} with $A=1+\varepsilon$, we obtain
  \begin{equation*}
    a_{\Psi,0}(y) = \frac{3}{\pi} \int_{0}^{\infty} \Psi(v) \frac{\dd v}{v^2} + O_\varepsilon (M^{1+\varepsilon} \sqrt{y}).
  \end{equation*}
  This is similar to previous works. But for the off-diagonal case we are considering, it seems not enough to use the above asymptotic formula. Our idea is to make use of the full expression of $a_{\Psi,0}$ as in Lemma \ref{lemma:FC}.
\end{remark}

\begin{lemma}\label{lemma:Eisenstein}
  Let $1\leq M\leq \log k$.
  Let $\Psi:\mathbb{R}\rightarrow\mathbb{C}$ be a smooth compactly supported function such that $y^\ell \Psi^{(\ell)}(y) \ll_\ell M^\ell$ for all $\ell\in\mathbb{Z}_{\geq0}$ and $\supp \Psi\subset [1/2,B M]$ for some absolute constant $B>1$.
  Then we have
  \[
    \langle E(\cdot|\Psi) F, G \rangle
    \ll_{\varepsilon}
    M^{5/3} (\log k)^\varepsilon
    \prod_{p\leq k} \left( 1-\frac{\tfrac14 (|\lambda_f(p)|-1)^2+\tfrac14 (|\lambda_g(p)|-1)^2}{p} \right).
  \]
\end{lemma}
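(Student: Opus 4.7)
The plan is to follow the Holowinsky-style strategy of \cite[Lemma 4.6]{LMR} (the diagonal case $f=g$), with two modifications dictated by $f\neq g$ and flagged in the remark immediately preceding the lemma.

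First, I would unfold the incomplete Eisenstein series against the $\Gamma$-invariant function $F\bar G$ to get
$$\langle E(\cdot|\Psi) F, G\rangle = \int_0^\infty \Psi(y)\,y^{k-2}\,I_0(y)\,dy,$$
where $I_0(y) = \int_{-1/2}^{1/2} f(z)\overline{g(z)}\,dx = a_f(1)\overline{a_g(1)}\sum_{n\geq 1}\lambda_f(n)\lambda_g(n)\,n^{k-1}\,e^{-4\pi ny}$ is the zeroth Fourier coefficient of $f\bar g$. Then, exactly as in the Mellin-transform/Stirling argument used in the proof of Lemma \ref{lemma:Poincare}, I would combine with \eqref{eqn:firstcoeff} to reduce the problem, up to an error $O_\varepsilon(k^{-1+\varepsilon})$, to bounding
$$\frac{1}{k\,L(1,\sym^2 f)^{1/2}L(1,\sym^2 g)^{1/2}}\sum_{n\leq k^{1+\varepsilon}}\lambda_f(n)\lambda_g(n)\,\Psi\!\Big(\frac{k-1}{4\pi n}\Big),$$
with the support condition on $\Psi$ effectively restricting the sum to $n\asymp k/M$ up to $n\asymp k$.

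Second, since $\lambda_f(n)\lambda_g(n)$ is not of fixed sign, I would apply Cauchy--Schwarz in the form $|\lambda_f(n)\lambda_g(n)|\leq\tfrac12(|\lambda_f(n)|^2+|\lambda_g(n)|^2)$ and then apply a Shiu/Nair-type estimate to each of the two resulting multiplicative sums (cf.\ the use of Holowinsky's Theorem 1.2 at the end of the proof of Lemma \ref{lemma:Poincare}). Tracking the local Euler factors, using the Hecke relation $\lambda_f(p^2)=\lambda_f(p)^2-1$ and the identity $(|\lambda_f(p)|-1)^2=\lambda_f(p)^2-2|\lambda_f(p)|+1$, and combining with the $L(1,\sym^2 f)^{-1/2}L(1,\sym^2 g)^{-1/2}$ factor from \eqref{eqn:HS}, should then give the product $\prod_{p\leq k}(1-\tfrac14(|\lambda_f(p)|-1)^2/p-\tfrac14(|\lambda_g(p)|-1)^2/p)$; the factor $\tfrac14$, in place of the $\tfrac12$ of Lemma \ref{lemma:Poincare}, reflects the square root cost of the Cauchy--Schwarz splitting.

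Third, the $M^{5/3}$ factor is produced by using the full Fourier expansion of $a_{\Psi,0}(y)$ from Lemma \ref{lemma:FC}, as flagged in the remark: the rough asymptotic $a_{\Psi,0}(y)\approx\tfrac{3}{\pi}\tilde\Psi(-1)$ of Remark \ref{rmk:a0} is inadequate here because $\lambda_f(n)\lambda_g(n)$ is not nonnegative. Working with the full expression $a_{\Psi,0}(y)=\tfrac{3}{\pi}\tilde\Psi(-1)+\tfrac{1}{2\pi}\int\tilde\Psi(-\tfrac12-it)(y^{1/2+it}+\tfrac{\theta(1/2-it)}{\theta(1/2+it)}y^{1/2-it})\,dt$ together with the $M^{2/3+\varepsilon}$ decay of $a_{\Psi,\ell}(y)$ for $\ell\neq 0$ and a dyadic decomposition in $\ell$ then yields $M^{5/3}$ exactly as in \cite[Lemma 4.6]{LMR}. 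The main obstacle is threading these three ingredients together without waste: one cannot simply take absolute values and invoke the diagonal Holowinsky estimate (the sign of $\lambda_f(n)\lambda_g(n)$ prevents this), one cannot use the shortcut of Remark \ref{rmk:a0} (same reason), and the Cauchy--Schwarz splitting must be arranged so that the resulting Euler product has exponent exactly $\tfrac14$ rather than something worse, while the $M$-dependence remains $M^{5/3}$.
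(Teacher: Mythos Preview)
There is a genuine gap in your second step, and it propagates through the rest of the proposal. The inequality $|\lambda_f(n)\lambda_g(n)|\le\tfrac12(|\lambda_f(n)|^2+|\lambda_g(n)|^2)$ followed by a Shiu/Nair bound on $\sum_{n\le k}|\lambda_f(n)|^2$ yields nothing: by Rankin--Selberg that sum is $\asymp k\,L(1,\sym^2 f)$, so after dividing by $kL(1,\sym^2 f)^{1/2}L(1,\sym^2 g)^{1/2}$ you obtain $(L_f/L_g)^{1/2}+(L_g/L_f)^{1/2}\ge 2$, i.e.\ a trivial bound. No Euler product of the shape $\prod_{p\le k}\bigl(1-\tfrac14(|\lambda_f(p)|-1)^2/p-\tfrac14(|\lambda_g(p)|-1)^2/p\bigr)$ emerges from this splitting; the $\tfrac14$ does \emph{not} come from a Cauchy--Schwarz loss. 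There is also a structural confusion: your direct unfolding of $E(\cdot\mid\Psi)$ produces only $\int\Psi(y)\,(\ldots)\,dy$, with no $a_{\Psi,\ell}$ at all, so the remarks in your third step about $a_{\Psi,0}$ and the $M^{2/3+\varepsilon}$ decay of $a_{\Psi,\ell}$ for $\ell\ne0$ refer to objects that never appear in your setup.

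The paper's argument is organised differently. One introduces an auxiliary smoothed Eisenstein series $E^Y(z\mid h)$ with a free parameter $Y$, unfolds \emph{that}, and writes $I(Y)=\sum_{\ell\in\mathbb{Z}}I_\ell(Y)$ via the Fourier expansion of $E(z\mid\Psi)$; this is where the coefficients $a_{\Psi,\ell}$ enter. For $\ell\ne0$ one uses the $a_{\Psi,\ell}$ bound of Lemma~\ref{lemma:FC} together with Holowinsky's shifted--sum estimate for $\sum_n|\lambda_f(n)\lambda_g(n+\ell)|$, giving the product with exponent $\tfrac12$ and the factor $M^{5/3}Y^{1/2}$. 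For $\ell=0$ one does \emph{not} take absolute values: instead, the full expression for $a_{\Psi,0}$ converts $I_0(Y)$ into integrals of $S(t,H)$, and the key point is that $\sum_n\lambda_f(n)\lambda_g(n)n^{-s}=L(s,f\times g)/\zeta(2s)$ is entire (since $f\ne g$), so one may shift the contour to $\Re s=\tfrac12$ and apply Soundararajan's weak subconvexity for $L(1/2+it,f\times g)$. This gives $I_0(Y)\ll(Y^{1/2}+M^{1/2})(\log k)^\varepsilon$ with genuine cancellation, not a multiplicative bound. Finally, combining with the error $M^{1/2}Y^{-1/2}$ from the $E^Y$ step and optimising over $Y$ is what produces the $\tfrac14$ in the exponent. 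The missing idea in your plan is precisely this $L$-function treatment of the unshifted $\ell=0$ sum.
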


\begin{proof}
  The proof uses the ideas from Holowinsky \cite{Holowinsky} and Lester--Matom\"aki--Radziwi{\l\l} \cite{LMR}.
  Let $1\leq Y\leq (\log k)^5$ be chosen later.  Define the smoothed incomplete Eisenstein series
  \begin{equation}\label{eqn:ES-Y}
    E^Y(z|h) = \sum_{\gamma\in\Gamma_\infty\backslash\Gamma} h(Y \Im(\gamma z)),
  \end{equation}
  where $h$ is a fixed nonnegative smooth compactly supported function on $\mathbb{R}$. Assume 
  \begin{equation}\label{eqn:h}
    \textrm{$\supp h\subset [1,2]$, $h^{(j)}(y) \ll_j 1$, and $\tilde{h}(-1)=\int_{0}^{\infty} h(y) y^{-2} \dd y=\pi/3$.}
  \end{equation}
  Define
  \begin{align*}
    I(Y):=\langle E^Y(\cdot |h) E(\cdot|\Psi) F, G\rangle .
  \end{align*}
  To estimate $\langle  E(\cdot|\Psi) F, G\rangle$, we will reduce it to bounding $I(Y)$.
  By the Mellin inversion,  we have
  \begin{align*}
    I(Y)
    = \frac{1}{2\pi i} \int_{(2)} \tilde{h}(-s) Y^s  \langle E(\cdot,s) E(\cdot|\Psi) F, G\rangle \dd s.
  \end{align*}
  Moving the line of integration to $\Re(s)=1/2$ we get
  \[
    I(Y) = \frac{3}{\pi} \tilde{h}(-1) Y \langle  E(\cdot|\Psi) F, G\rangle +  \frac{1}{2\pi i} \int_{(1/2)} \tilde{h}(-s) Y^s  \langle E(\cdot,s) E(\cdot|\Psi) F, G\rangle \dd s.
  \]
  By \eqref{eqn:HX} and unfolding, we have
  \begin{align*}
    \langle E(\cdot,1/2+it) E(\cdot|\Psi) F, G\rangle
    & = \int_{0}^{\infty} \int_{-1/2}^{1/2} \Psi(y) E(z,1/2+it) F(z) \overline{G(z)} \dd \mu z \\
    & \ll (1+|t|)^{1/2} \int_{1/2}^{BM}  \int_{-1/2}^{1/2} \sqrt{y} |F(z)| |G(z)| \dd \mu z .
  \end{align*}
  Using the Cauchy--Schwarz inequality, we get
  \begin{align*}
    \langle E(\cdot,1/2+it) E(\cdot|\Psi) F, G\rangle
    & \ll (1+|t|)^{1/2} \sqrt{M} \int_{1/2}^{BM}  \int_{-1/2}^{1/2}  ( |F(z)|^2 +  |G(z)|^2) \dd \mu z \\
    & \ll  (1+|t|)^{1/2} \sqrt{M} ( \|f\|_2^2 + \|g\|_2^2)
    \ll \sqrt{M(1+|t|)}.
  \end{align*}
  Hence
  \begin{equation}\label{eqn:IY=}
    \langle  E(\cdot|\Psi) F, G\rangle - I(Y) Y^{-1}
    \ll M^{1/2} Y^{-1/2}.
  \end{equation}

  Now we follow Holowinsky's approach to break $I(Y)$ into $I_\ell(Y)$.
  By the unfolding method, we obtain
  \begin{align*}
    I(Y)
    & = \int_{0}^{\infty} \int_{-1/2}^{1/2} h(Yy)  E(z|\Psi) F(z) \overline{G(z)} \dd \mu z \\
    &= a_f(1) \overline{a_g(1)} \sum_{\ell\in\mathbb{Z}} \sum_{\substack{n\geq1 \\ n+\ell\geq1}} \lambda_f(n) \lambda_g(n+\ell) (n(n+\ell))^{(k-1)/2}
    \\ & \hskip 90pt \cdot
     \int_{0}^{\infty}   h(Yy)  a_{\Psi,\ell}(y) \exp(-2\pi (2n+\ell) y) y^{k-2} \dd y
     =:  \sum_{\ell\in\mathbb{Z}}  I_\ell(Y),
  \end{align*}
  where
  \begin{align}\label{eqn:I0}
    I_0(Y)
    & = a_f(1) \overline{a_g(1)} \sum_{\substack{n\geq1  }} \lambda_f(n) \lambda_g(n) n^{k-1}
     \int_{0}^{\infty}   h(Yy)  a_{\Psi,0}(y) \exp(-4\pi n y) y^{k-2} \dd y ,
  \end{align}
  and, by Lemma \ref{lemma:FC}, for $\ell\neq0$,  we have
  \begin{align}
     \label{eqn:Iell}
    I_\ell(Y)
    & \ll  \frac{ M^{2/3+\varepsilon} Y^{-1/2+\varepsilon} \tau(|\ell|) \left(\frac{MY}{|\ell|}\right)^{A}}{L(1,\sym^2 f)^{1/2} L(1,\sym^2 g)^{1/2} }  \sum_{\substack{n\geq1 \\ n+\ell\geq1}} |\lambda_f(n) \lambda_g(n+\ell)| \nonumber
    \\ & \hskip 60pt \cdot \frac{(4\pi)^{k-1}}{\Gamma(k)}   (n(n+\ell))^{(k-1)/2}
     \int_{0}^{\infty}   h(Yy)   \exp(-2\pi (2n+\ell) y) y^{k-2} \dd y,
  \end{align}
  for any $A\geq0$.
  We can truncate the $n$-sum in $I_\ell(Y)$ at $n\leq k^{1+\varepsilon}$ with a negligible error $O_A((1+|\ell|)^{-2} k^{-A})$, since
  for $n\geq k^{1+\varepsilon}$ and $n+\ell\geq1$ we have
  \begin{align*}
    \frac{(4\pi)^{k-1}}{\Gamma(k)}  &  (n(n+\ell))^{(k-1)/2}
     \int_{0}^{\infty}   h(Yy)   \exp(-2\pi (2n+\ell) y) y^{k-2} \dd y \\
     &
     \ll \exp(k (\log n(n+\ell)) - 2\pi n/Y -2 \pi (n+\ell)/Y )
     \ll n^{-5} e^{-k^{1+\varepsilon/2}}
  \end{align*}
  for $1\leq Y\leq (\log k)^5$.

  We first estimate $I_\ell(Y)$ when  $|\ell|\geq k^\varepsilon$.
  For $n\leq k^{1+\varepsilon}$, similar to \eqref{eqn:integral} we can get
  \begin{multline*}
    \frac{(4\pi)^{k-1}}{\Gamma(k)}   (n(n+\ell))^{(k-1)/2}
     \int_{0}^{\infty}   h(Yy)   \exp(-2\pi (2n+\ell) y) y^{k-2} \dd y \\
     =
     \frac{(4\pi)^{k-1}}{\Gamma(k)}   (n(n+\ell))^{(k-1)/2}
     \int_{0}^{\infty}  \frac{1}{2\pi i} \int_{(2)} \tilde{h}(-s) (yY)^{s} \dd s \exp(-2\pi (2n+\ell) y) y^{k-2} \dd y \\
     \ll
     \frac{(4\pi)^{k-1}}{\Gamma(k)}   (n(n+\ell))^{(k-1)/2}
         Y^2 \int_{0}^{\infty} \exp(-2\pi (2n+\ell) y) y^{k} \dd y \\
     \ll \frac{(4\pi)^{k-1}}{\Gamma(k)}   (n+\ell/2)^{k-1}
         Y^2  \frac{\Gamma(k+1)}{(2\pi)^{k+1} (2n+\ell)^{k+1}}
     \ll n^{-2} k Y^2.
  \end{multline*}
  Here we have used the fact $n^{1/2} (n+\ell)^{1/2}\leq n+\ell/2$.
  Hence by taking $A$ to be large enough in \eqref{eqn:Iell}, we get
  \begin{equation}\label{eqn:Iell>>}
    \sum_{|\ell|\geq k^{\varepsilon}}  I_\ell(Y) \ll_{\varepsilon,A} k^{-A}.
  \end{equation}

  For $1\leq |\ell | \leq k^\varepsilon$, we can deal with the integrals the same as in the proof of Lemma \ref{lemma:Poincare}. By \cite[Eq. (4.21)]{LMR}, for $n\leq k^{1+\varepsilon}$ we have
  \begin{multline}\label{eqn:integral=}
    \frac{(4\pi)^{k-1}}{\Gamma(k)}   (n(n+\ell))^{(k-1)/2}
     \int_{0}^{\infty}   h(Yy)   \exp(-2\pi (2n+\ell) y) y^{k-2} \dd y
     \\
     = \frac{1}{k-1} h\left(\frac{Y(k-1)}{4\pi (n+\ell/2)}\right) +O(k^{\varepsilon-2}) + O(n^{-3/2} k^{\varepsilon-1/2}).
  \end{multline}
  Hence we get
  \[
    I_\ell(Y)
    \ll  \frac{ M^{2/3+\varepsilon} Y^{-1/2+\varepsilon} \tau(|\ell|) \left(\frac{MY}{|\ell|}\right)^{A}}{k L(1,\sym^2 f)^{1/2} L(1,\sym^2 g)^{1/2} }  \sum_{\substack{1\leq n\leq k Y}} |\lambda_f(n) \lambda_g(n+\ell)|
    + O_\varepsilon( k^{-1/2+\varepsilon} ),
  \]
  for any $A\geq0$.
  By similar argument as in \eqref{eqn:Holowinsky} we get
  \[
    I_\ell(Y)
    \ll  \frac{ M^{2/3} Y^{1/2} \tau(|\ell|) \left(\frac{MY}{|\ell|}\right)^{A}}{L(1,\sym^2 f)^{1/2} L(1,\sym^2 g)^{1/2} }  (\log k)^{\varepsilon} \prod_{p\leq k}
     \left(1+\frac{|\lambda_f(p)|-1}{p} \right) \left(1+\frac{|\lambda_g(p)|-1}{p} \right).
  \]
  Together with \eqref{eqn:HS} we obtain
  \begin{align*}
    \sum_{1\leq |\ell|\leq k^{\varepsilon}}  I_\ell(Y) &
    \ll_{\varepsilon}
    M^{2/3} Y^{1/2} (\log k)^\varepsilon
    \prod_{p\leq k} \left( 1-\frac{\tfrac12 (|\lambda_f(p)|-1)^2+\tfrac12 (|\lambda_g(p)|-1)^2}{p} \right) \\
    & \hskip 120pt  \cdot \sum_{1\leq |\ell|\leq k^{\varepsilon}} \tau(|\ell|) \left(\frac{MY}{|\ell|}\right)^{A} .
  \end{align*}
  By taking $A=1+\varepsilon$, we get
  \begin{align}\label{eqn:Iell<<}
    \sum_{1\leq |\ell|\leq k^{\varepsilon}}  I_\ell(Y)
    \ll_{\varepsilon}
    M^{5/3} Y^{3/2} (\log k)^\varepsilon
    \prod_{p\leq k} \left( 1-\frac{\tfrac12 (|\lambda_f(p)|-1)^2+\tfrac12 (|\lambda_g(p)|-1)^2}{p} \right).
  \end{align}

  Now we deal with $I_0(Y)$. We will sue the explicit formula for the zeroth Fourier coefficient $a_{\Psi,0}(y)$.
  By Lemma \ref{lemma:FC}, we have
   \begin{align*}
    I_{0}(Y)
    & = \frac{3}{\pi} \tilde{\Psi}(-1) a_f(1) \overline{a_g(1)}
    \sum_{\substack{n\geq1}} \lambda_f(n) \lambda_g(n) n^{k-1}
     \int_{0}^{\infty}   h(Yy) \exp(-4\pi n y) y^{k-2} \dd y  \nonumber \\
     & \qquad  +
      \frac{1}{2\pi} \int_{\mathbb{R}} \tilde\Psi(-1/2-it)
      a_f(1) \overline{a_g(1)}
    \sum_{\substack{n\geq1}} \lambda_f(n) \lambda_g(n) \nonumber \\
    & \qquad \quad \cdot  n^{k-1}
     \int_{0}^{\infty}   h(Yy) \exp(-4\pi n y) y^{k-2}
    \left( y^{1/2+it} + \frac{\theta(1/2-it)}{\theta(1/2+it)} y^{1/2-it} \right) \dd y \dd t.
  \end{align*}
  Define
  \[
      S(t,H) :=  a_f(1) \overline{a_g(1)}
    \sum_{\substack{n\geq1}} \lambda_f(n) \lambda_g(n)
    n^{k-1}
     \int_{0}^{\infty}   H(Yy) \exp(-4\pi n y) y^{k-2} y^{it} \dd y.
  \]
  Then we have
  \begin{equation}\label{eqn:I0=S}
    I_{0}(Y) = \frac{3}{\pi} \tilde{\Psi}(-1) S(0,h) +
     \frac{1}{2\pi Y^{1/2}} \int_{\mathbb{R}} \tilde\Psi(-1/2-it) \left( S(t,h_1)+ \frac{\theta(1/2-it)}{\theta(1/2+it)} S(-t,h_1)\right) \dd t,
  \end{equation}
  where $h_1(y)=h(y) y^{1/2}$.
  It suffices to bound $S(t,H)$ with $t\in \mathbb{R}$ and smooth functions $H: \mathbb{R} \rightarrow \mathbb{R}_{\geq0}$ such that $\supp H\subset [1,2]$ and $H^{(j)}\ll_j 1$.

  By \eqref{eqn:integral=} we have
  \begin{align*}
    S(t,H) & \ll \frac{1}{L(1,\sym^2 f)^{1/2} L(1,\sym^2 g)^{1/2}}
    \sum_{\substack{n\geq1}} |\lambda_f(n) \lambda_g(n)|
    \\
    & \hskip 60pt\cdot
    n^{k-1} \frac{(4\pi)^{k-1}}{\Gamma(k)}
     \int_{0}^{\infty}   H(Yy) \exp(-4\pi n y) y^{k-2} \dd y \\
     & \ll \frac{1}{L(1,\sym^2 f)^{1/2} L(1,\sym^2 g)^{1/2}}
    \sum_{\substack{n\asymp kY}} |\lambda_f(n) \lambda_g(n)|
    \frac{1}{k} + O(k^{-1/4})
    \ll (\log k)^{100}.
  \end{align*}
  Recall that  $\tilde\Psi$ satisfies \eqref{eqn:tildePsi<<}.
  Hence the contribution from $|t|\geq M (\log k)^\varepsilon$ to $I_0(Y)$
  is bounded by
  \begin{multline} \label{eqn:t-large}
    O_A\left(  \int_{M (\log k)^\varepsilon}^{\infty} M^{A+1} |t|^{-A-1} (\log k)^{100} \dd t  \right) \\
    = O_A\left(    M^{A+1}  (\log k)^{100} M^{-A} (\log k)^{-A\varepsilon } \right)
    = O_{\varepsilon,A} \left(  (\log k)^{-A} \right).
  \end{multline}

  Now we consider the case $|t|\leq M (\log k)^\varepsilon$. By our assumption, we have $|t| \leq (\log k)^2$.
  Mellin inversion gives
  \begin{multline*}
     n^{k-1} \frac{(4\pi)^{k-1}}{\Gamma(k)}
     \int_{0}^{\infty}   H(Yy) \exp(-4\pi n y) y^{k-2} y^{it} \dd y
     \\
     =
     \frac{1}{2\pi i} \int_{(2)} \tilde{H}(-s) Y^s (4\pi n )^{-s-it}
     \frac{\Gamma(k+s+it-1) }{\Gamma(k)}    \dd s.
  \end{multline*}
  Hence
  \[
    S(t,H) \ll \frac{| \sum_{\substack{n\geq1}} \lambda_f(n) \lambda_g(n) \frac{1}{2\pi i} \int_{(2)} \tilde{H}(-s) Y^s (4\pi n )^{-s-it}
     \frac{\Gamma(k+s+it-1) }{\Gamma(k)}    \dd s  |}{L(1,\sym^2 f)^{1/2} L(1,\sym^2 g)^{1/2}}.
  \]
  Recall that $\sum_{n\geq1} \frac{\lambda_f(n) \lambda_g(n)}{n^s} = \frac{L(s,f\times g)}{\zeta(2s)}$. We have
  \[
    S(t,H) \ll \frac{| \frac{1}{2\pi i} \int_{(2)} \tilde{H}(-s) Y^s (4\pi)^{-s} \frac{L(s+it,f\times g)}{\zeta(2s+2it)}
     \frac{\Gamma(k+s+it-1) }{\Gamma(k)}    \dd s  |}{L(1,\sym^2 f)^{1/2} L(1,\sym^2 g)^{1/2}}.
  \]
  Moving the line of integration to $\Re(s)=1/2$ and noting that $L(s,f\times g)$ is holomorphic if $f\neq g$, we obtain
  \[
    S(t,H) \ll \frac{ Y^{1/2} k^{-1/2} }{L(1,\sym^2 f)^{1/2} L(1,\sym^2 g)^{1/2}}
     \int_{\mathbb{R}} |\tilde{H}(-1/2-iv)| |\frac{L(1/2+iv+it,f\times g)}{\zeta(1+2iv+2it)}|    \dd v.
  \]
  Here we have used the fact $
     \frac{|\Gamma(k+iv+it-1/2)|}{\Gamma(k)} \leq   \frac{|\Gamma(k-1/2)|}{\Gamma(k)} \ll k^{-1/2} $.
  Now we should apply Soundararajan's weak subconvexity bounds for $L(1/2+iv+it,f\times g)$, since the Ramanujan conjecture is known in this setting. The analytic conductor of this $L$-function is of size $k^2 (1+|v+t|)^2$. By Deligne's bounds \cite{Deligne}, Soundararajan's theorem \cite{Soundararajan2010weak} gives
  \[
    L(1/2+iv+it,f\times g) \ll \frac{k^{1/2} (1+|v+t|)^{1/2}}{(\log k)^{1-\varepsilon}}.
  \]
  Using the bounds $|\tilde{H}(-1/2-iv)|\ll (1+|v|)^{-2022}$ and $|\zeta(1+2iv+2it)|^{-1}\ll \log (1+|v+t|)$, we get
  \begin{align}\label{eqn:t-small}
    S(t,H) & \ll \frac{ Y^{1/2} k^{-1/2} }{L(1,\sym^2 f)^{1/2} L(1,\sym^2 g)^{1/2}}
     \int_{\mathbb{R}} (1+|v|)^{-2022} \frac{k^{1/2} (1+|v+t|)^{1/2}}{(\log k)^{1-\varepsilon}}    \dd v  \nonumber \\
     & \ll   \frac{ Y^{1/2}  }{L(1,\sym^2 f)^{1/2} L(1,\sym^2 g)^{1/2}}
      \frac{ (1+|t|)^{1/2}}{(\log k)^{1-\varepsilon}}  .
  \end{align}
  By \eqref{eqn:I0=S}, \eqref{eqn:t-large}, and \eqref{eqn:t-small} we get
  \begin{align}\label{eqn:I0<<}
    I_0(Y) &  \ll   \frac{ Y^{1/2}  }{L(1,\sym^2 f)^{1/2} L(1,\sym^2 g)^{1/2} (\log k)^{1-\varepsilon}}
       \nonumber \\
     & \qquad  + Y^{-1/2} \int_{|t|\leq M(\log k)^\varepsilon} \frac{ Y^{1/2}  }{L(1,\sym^2 f)^{1/2} L(1,\sym^2 g)^{1/2}}
      \frac{ (1+|t|)^{1/2}}{(\log k)^{1-\varepsilon}} \dd t \nonumber \\
     & \ll  \frac{ Y^{1/2} +  M^{1/2}  (\log k)^\varepsilon }{L(1,\sym^2 f)^{1/2} L(1,\sym^2 g)^{1/2} (\log k)^{1-\varepsilon}}
     \ll ( Y^{1/2} +  M^{1/2} )  (\log k)^\varepsilon .
  \end{align}

  Now by \eqref{eqn:IY=}, \eqref{eqn:Iell>>}, \eqref{eqn:Iell<<}, and \eqref{eqn:I0<<}, we have
  \begin{multline*}
    \langle  E(\cdot|\Psi) F, G\rangle
    \ll M^{1/2} Y^{-1/2} (\log k)^\varepsilon \\
    +
    M^{5/3} Y^{1/2} (\log k)^\varepsilon
    \prod_{p\leq k} \left( 1-\frac{\tfrac12 (|\lambda_f(p)|-1)^2+\tfrac12 (|\lambda_g(p)|-1)^2}{p} \right).
  \end{multline*}
  By taking $$Y=\prod_{p\leq k} \left( 1+\frac{\tfrac12 (|\lambda_f(p)|-1)^2+\tfrac12 (|\lambda_g(p)|-1)^2}{p} \right),$$
  we complete the proof of Lemma \ref{lemma:Eisenstein}.
\end{proof}

\begin{proposition} \label{prop:Holowinsky}
  With the notation and assumptions of Theorem \ref{thm:decor}, we have
  \[
    \langle \psi F, G \rangle
    = O_\varepsilon \left( M^{5/3} (\log k)^\varepsilon
    \prod_{p\leq k} \left( 1-\frac{\tfrac14 (|\lambda_f(p)|-1)^2+\tfrac14 (|\lambda_g(p)|-1)^2}{p} \right) \right) .
  \]
\end{proposition}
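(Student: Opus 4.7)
The plan is to assemble Proposition \ref{prop:Holowinsky} directly from the incomplete Poincaré expansion established at the start of \S \ref{sec:Holowinsky} together with the two main lemmas of the section. Concretely, by \eqref{eqn:<psiFG><<PS} we have
\[
  \langle \psi F, G \rangle \;=\; \sum_{|m|\leq M(\log k)^\varepsilon} \langle P_m(\cdot,\Psi_m) F, G\rangle + O_{\varepsilon,A}((\log k)^{-A}),
\]
so I only need to bound each term on the right. First I would verify that each Fourier coefficient $\Psi_m$ inherits the test-function hypotheses of Lemmas \ref{lemma:Poincare} and \ref{lemma:Eisenstein}: differentiation under the $x$-integral and the derivative bound \eqref{eqn:psi_cond} for $\psi$ give $y^\ell \Psi_m^{(\ell)}(y) \ll_\ell M^\ell$, while the support condition on $\psi$ forces $\supp \Psi_m \subset [\sqrt{3}/2, BM]$.

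Next, I would split the sum into $m=0$ and $m\neq 0$. The term $m=0$ is exactly the incomplete Eisenstein series $\langle E(\cdot|\Psi_0) F, G\rangle$, and Lemma \ref{lemma:Eisenstein} bounds it by
\[
  M^{5/3} (\log k)^\varepsilon \prod_{p\leq k}\Bigl(1-\tfrac{\tfrac14 (|\lambda_f(p)|-1)^2+\tfrac14 (|\lambda_g(p)|-1)^2}{p}\Bigr).
\]
For each $1 \leq |m| \leq M(\log k)^\varepsilon$, Lemma \ref{lemma:Poincare} gives
\[
  \langle P_m(\cdot,\Psi_m) F, G\rangle \ll (\log k)^\varepsilon \prod_{p\leq k}\Bigl(1-\tfrac{\tfrac12 (|\lambda_f(p)|-1)^2+\tfrac12 (|\lambda_g(p)|-1)^2}{p}\Bigr),
\]
and since $(|\lambda_f(p)|-1)^2+(|\lambda_g(p)|-1)^2 \geq 0$, the product over $p\leq k$ with coefficient $\tfrac12$ is no larger than the same product with coefficient $\tfrac14$. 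Summing over the $O(M(\log k)^\varepsilon)$ nonzero values of $m$ therefore contributes
\[
  O\Bigl(M (\log k)^\varepsilon \prod_{p\leq k}\bigl(1-\tfrac{\tfrac14(|\lambda_f(p)|-1)^2+\tfrac14(|\lambda_g(p)|-1)^2}{p}\bigr)\Bigr),
\]
which is absorbed into the $M^{5/3}$ bound coming from the $m=0$ term.

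Combining these estimates with the negligible tail error from \eqref{eqn:<psiFG><<PS} yields the desired bound. There is no genuine obstacle here beyond bookkeeping: the substantive work is already contained in Lemmas \ref{lemma:Poincare} and \ref{lemma:Eisenstein}, and the only subtlety is the switch from the $\tfrac12$-exponent in the Poincaré bound to the weaker $\tfrac14$-exponent in the Eisenstein bound, which is what forces the final coefficient in Proposition \ref{prop:Holowinsky}. This weakening is what later allows the proposition to combine cleanly with Proposition \ref{prop:Sound} in the optimization step of \S \ref{sec:proof}.
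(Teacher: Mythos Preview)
Your proposal is correct and is exactly the argument the paper has in mind: the paper's own proof is the single sentence ``This is a consequence of \eqref{eqn:<psiFG><<PS}, Lemmas \ref{lemma:Poincare} and \ref{lemma:Eisenstein},'' and you have simply spelled out the bookkeeping (verifying $\Psi_m$ satisfies the hypotheses, replacing the $\tfrac12$-coefficient product by the larger $\tfrac14$-coefficient one, and summing over $|m|\le M(\log k)^\varepsilon$).
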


\begin{proof}
  This is a consequence of \eqref{eqn:<psiFG><<PS}, Lemmas \ref{lemma:Poincare} and \ref{lemma:Eisenstein}.
\end{proof}

\section{Proof of Theorem \ref{thm:decor}} \label{sec:proof}

Now we are ready to prove Theorem \ref{thm:decor} by using Propositions \ref{prop:Sound} and \ref{prop:Holowinsky} and an optimization technique.

\begin{proof}[Proof of Theorem \ref{thm:decor}]
  By Propositions \ref{prop:Sound} and \ref{prop:Holowinsky}, we have
  \begin{multline*}
    \langle \psi F, G \rangle
    \ll_\varepsilon   M^{5/3} (\log k)^\varepsilon
    \min\Big(
       \prod_{p\leq k} \left(1 - \frac{\tfrac12 \lambda_f(p^2) + \tfrac12  \lambda_g(p^2)+\delta_1}{p} \right), \\
       \prod_{p\leq k} \left( 1-\frac{\tfrac14 (|\lambda_f(p)|-1)^2+\tfrac14 (|\lambda_g(p)|-1)^2}{p} \right) \Big).
  \end{multline*}
  So for any $\alpha\in[0,1]$, we have
  \begin{align*}
    \langle \psi F, G \rangle
    & \ll_\varepsilon   M^{5/3} (\log k)^\varepsilon
    \prod_{p\leq k} \left(1 - \frac{ \tfrac12 \lambda_f(p^2) + \tfrac12  \lambda_g(p^2)+\delta_1 }{p} \right)^\alpha  \\
    & \hskip 120pt \cdot \left( 1-\frac{ \tfrac14 (|\lambda_f(p)|-1)^2+\tfrac14 (|\lambda_g(p)|-1)^2 }{p} \right)^{1-\alpha} .
  \end{align*}
  Since for any sequence $b_p\ll1$ we have  $ \prod_{p\leq k} (1-\frac{b_p}{p})^{\alpha} \ll \prod_{p\leq k} (1-\frac{\alpha b_p}{p}) $, we obtain
  \begin{align*}
    \langle \psi F, G \rangle
    & \ll_\varepsilon   M^{5/3} (\log k)^\varepsilon
    \prod_{p\leq k} \left(1 - \frac{\alpha( \tfrac12 \lambda_f(p^2) + \tfrac12  \lambda_g(p^2)+\delta_1 )}{p} \right)  \\
    & \hskip 120pt \cdot \left( 1-\frac{(1-\alpha)( \tfrac14 (|\lambda_f(p)|-1)^2+\tfrac14 (|\lambda_g(p)|-1)^2)}{p} \right) \\
    & \ll_\varepsilon  M^{5/3} (\log k)^\varepsilon
    \prod_{p\leq k} \left(1 + \frac{L_p(\alpha)}{p} \right),
  \end{align*}
  where
  \[
    L_p(\alpha) := - \alpha( \tfrac12 \lambda_f(p^2) + \tfrac12  \lambda_g(p^2)+\delta_1 )
    - (1-\alpha)( \tfrac14 (|\lambda_f(p)|-1)^2+\tfrac14 (|\lambda_g(p)|-1)^2).
  \]
  By the Hecke relations, we have $\lambda_f(p^2)=\lambda_f(p)^2-1$. Now write $\lambda_1=|\lambda_f(p)|$ and $\lambda_2=|\lambda_g(p)|$. We have $\lambda_j\in[0,2]$ and $L_p(\alpha)=L(\alpha,\lambda_1,\lambda_2)$ with
  \[
    L(\alpha,\lambda_1,\lambda_2)
    =   -  \alpha \delta_1
    - \frac{1+\alpha}{4} \lambda_1^2 + \frac{1-\alpha}{2} \lambda_1
    - \frac{1+\alpha}{4} \lambda_2^2 + \frac{1-\alpha}{2} \lambda_2 - \frac{1-3\alpha}{2}.
  \]
  We want to find
  \[
    L = \min_{\alpha\in [0,1]} \max_{\substack{\lambda_1\in[0,2] \\ \lambda_2\in[0,2]}} L(\alpha,\lambda_1,\lambda_2).
  \]
  Note that
  \[
    L(\alpha,\lambda_1,\lambda_2)
    =   -  \alpha \delta_1 - \frac{1-3\alpha}{2} + \frac{(1-\alpha)^2}{2(1+\alpha)}
    - \frac{1+\alpha}{4} \left(\lambda_1 - \frac{1-\alpha}{1+\alpha} \right)^2
    - \frac{1+\alpha}{4} \left(\lambda_2 - \frac{1-\alpha}{1+\alpha} \right)^2.
  \]
  We have
  \[
    L = \min_{\alpha\in [0,1]} L(\alpha),
  \]
  where
   \[
     L(\alpha) := -  \alpha \delta_1 - \frac{1-3\alpha}{2} + \frac{(1-\alpha)^2}{2(1+\alpha)}
     = (2-\delta_1)(1+\alpha) + \frac{2}{1+\alpha} -4+\delta_1.
   \]
   Assume $\delta_1\in[0,1]$. Then we have
   \[
     L=2\sqrt{2(2-\delta_1)}-4+\delta_1.
   \]
   Hence we obtain
   \[
     \langle \psi F, G \rangle
     \ll_\varepsilon  M^{5/3} (\log k)^\varepsilon
    \prod_{p\leq k} \left(1 + \frac{L}{p} \right)
    \ll_\varepsilon  M^{5/3} (\log k)^{L+\varepsilon}.
   \]
   This completes the proof of Theorem \ref{thm:decor}.
\end{proof}

\section{Proof of Theorem \ref{thm:QUE}} 
\label{sec:QUE}

\begin{proof}[Proof of Theorem \ref{thm:QUE}]
  By Lester, Matom\"aki, and Radziwi{\l\l} \cite[Theorem 1.3]{LMR}, for $f_j\in H_k$,
  we have
  \[
    \langle \psi F_j, F_j \rangle = \frac{3}{\pi} \langle \psi,1\rangle + O_\varepsilon( M^2 (\log k)^{-0.007}).
  \]
  By Theorem \ref{thm:decor}, for $i\neq j$, we have
  \[
    \langle \psi F_i, F_j \rangle =  O_\varepsilon( M^{5/3} (\log k)^{-\delta+\varepsilon}).
  \]
  Note that for $f=\sum_{1\leq j\leq J} c_j f_j$ and $\|f\|_2=1$, we have $\sum_{1\leq j\leq J}|c_j|^2=1$. Hence
  \begin{align*}
    \langle \psi , |F|^2 \rangle & = \langle \psi F, F \rangle
    = \langle \psi \sum_{1\leq i\leq J} c_i F_i, \sum_{1\leq j\leq J} c_j F_j \rangle \\
    & = \sum_{1\leq i\leq J} \sum_{1\leq j\leq J} c_i \overline{c_j} \langle \psi F_i, F_j \rangle \\
    & = \sum_{1\leq j\leq J } |c_j|^2 \langle \psi F_j, F_j \rangle
    + \sum_{1\leq i\leq J} \sum_{\substack{1\leq j\leq J \\ j\neq i}} c_i \overline{c_j} \langle \psi F_i, F_j \rangle \\
    & = \frac{3}{\pi} \langle \psi,1\rangle + O( M^2 (\log k)^{-0.007})
    + O_\varepsilon\bigg( \sum_{1\leq i\leq J} \sum_{\substack{1\leq j\leq J \\ j\neq i}} |c_i \overline{c_j}|  M^{5/3} (\log k)^{-\delta+\varepsilon} \bigg).
  \end{align*}
  Now we use the fact $|c_i \overline{c_j}| \leq |c_i|^2+ |c_j|^2$, getting
  \[
    \langle \psi , |F|^2 \rangle
    = \frac{3}{\pi} \langle \psi,1\rangle + O( M^2 (\log k)^{-0.007})
    + O_\varepsilon\bigg( J M^{5/3} (\log k)^{-\delta+\varepsilon} \bigg).
  \]
  Note that we have the trivial bound $\langle \psi , |F|^2 \rangle  \ll \langle 1, |F|^2 \rangle =1 $.
  This completes the proof of Theorem \ref{thm:QUE}.
\end{proof}



\section{Proof of Theorem \ref{thm:equidistribution}} \label{sec:zero}

To prove Theorem \ref{thm:equidistribution}, we only need to prove that 
the bulk of zeros of $f\in H_k^{(J)}$ lies in the fundamental domain. 
It suffices to prove that the order of $f$ at infinity $\nu_\infty(f)$ is $o(k)$.
Write the Fourier expansion of $f\in H_k^{(J)}$ as
\[
  f(z) = \sum_{n\geq1} a_f(n) n^{\frac{k-1}{2}}e(nz).
\]
So we need to show that there exists $n=o(k)$ such that $a_f(n)\neq0$. 

Recall that we have the incomplete Eisenstein series $E^{1/Y}(z|h)$ as in \eqref{eqn:ES-Y}, where $Y\geq1$ is a parameter which will be chosen later.
We assume the same conditions \eqref{eqn:h} on $h$.   By the proof of Theorem \ref{thm:decor} we know that EQQUE holds for $f$ with the observable $\psi(z)=E^{1/Y}(z|h)$ by taking $M=Y$. Assume $\|f\|_2=1$. As in Theorem \ref{thm:QUE}, we have
\begin{equation*}
  \langle E^{1/Y}(\cdot|h) F,F \rangle =  \frac{3}{\pi} \langle E^{1/Y}(\cdot|h) ,1 \rangle 
  + O_{\varepsilon}( J Y^{5/3} (\log k)^{-\delta+\varepsilon}).
\end{equation*}
Note that 
$\frac{3}{\pi}\langle E^{1/Y}(\cdot|h) ,1 \rangle  =1/Y$. 
By taking 
\[
  Y= \left(\frac{(\log k)^{\delta-\varepsilon}}{J}\right)^{3/8},
\]
we get $\langle E^{1/Y}(\cdot|h) F,F \rangle\sim 1/Y$.

By the unfolding method as in \S \ref{sec:Holowinsky}, we have 
\[
  \langle E^{1/Y}(\cdot|h) F,F \rangle
  = \sum_{n\geq1} |a_f(n)|^2 n^{k-1} \int_{0}^{\infty} h(y/Y) e^{-4\pi ny} y^{-2} \dd y. 
\]
We want to show that the contribution from the terms $n\geq k Y^{-1/2}$ is negligibly small. 
Let $f=\sum_{j=1}^J c_j f_j$ with $f_j\in H_k$. Then we have $\sum_j |c_j|^2=1$ and $a_f(n) = \sum_j c_j a_{f_j}(1)\lambda_{f_j}(n)$. 
So 
\[
  |a_f(n)|^2 \leq \sum_j |a_{f_j}(1)\lambda_{f_j}(n)|^2 \ll \tau(n)^2 J \frac{(4\pi)^{k-1}}{\Gamma(k)} \log k.
\]
Here we have used \eqref{eqn:firstcoeff} and the fact $L(1,\sym^2 f_j) \gg 1/\log k$ in \cite{HL}.
By Stirling's formula, we have 
\begin{align*}
  \frac{(4\pi)^{k-1}}{\Gamma(k)} n^{k-1} e^{-4\pi nY} 
  & \ll k^{-1/2} \exp\left(- (k-1)\log \frac{k-1}{4\pi e n} - 4\pi nY \right).
\end{align*}
Assume $J\leq (\log k)^{\delta-2\varepsilon}$. So that $Y\geq (\log k)^{\varepsilon}$.
For $kY^{-1/2} \leq n \leq k (\log k)^2 $, we have 
\begin{align*}
  \frac{(4\pi)^{k-1}}{\Gamma(k)} n^{k-1} e^{-4\pi nY}
  & \ll k^{-1/2} \exp\left(3(k-1)\log \log k - 4\pi kY^{1/2} \right)  \ll \exp(-4\pi k).
\end{align*}
For $k(\log k)^2 \leq n \leq k^2$, we have
\begin{align*}
  \frac{(4\pi)^{k-1}}{\Gamma(k)} n^{k-1} e^{-4\pi nY}
  & \ll k^{-1/2} \exp\left(2(k-1)\log k - 4\pi k(\log k)^2  \right)  \ll \exp(-4\pi k).
\end{align*}
For $n\geq k^2$, we have 
\begin{align*}
  \frac{(4\pi)^{k-1}}{\Gamma(k)} n^{k-1} e^{-4\pi nY}
  \ll \exp(-4\pi k -n).
\end{align*}
Hence we get 
\begin{align*}
  \sum_{n\geq kY^{-1/2} } & |a_f(n)|^2 n^{k-1} \int_{0}^{\infty} h(y/Y) e^{-4\pi ny} y^{-2} \dd y  \\
   & \ll \sum_{k(\log k)^2 \leq n \leq k^2} \tau(n)^2 J (\log k)  Y^{-1} \exp(-4\pi k) \\
   & \qquad  + \sum_{ n \geq k^2} \tau(n)^2 J (\log k)  Y^{-1} \exp(-4\pi k -n)
   \\
   & \ll e^{-k}. 
\end{align*}
So we have 
\[\sum_{n\leq kY^{-1/2} } |a_f(n)|^2 n^{k-1} \int_{0}^{\infty} h(y/Y) e^{-4\pi ny} y^{-2} \dd y \sim 1/Y.\]
Therefore, there exists $n=o(k)$ satisfying that $a_f(n)\neq 0$, and hance $\nu_\infty(f)=o(k)$. 
Together with \cite[Theorem 1.2]{Rudnick} and Theorem \ref{thm:QUE}, we complete the proof of Theorem \ref{thm:equidistribution}.

\section*{Acknowledgements}
The author wants to thank  Peter Humphries, Noam Kimmel, Stephen Lester, Ze\'ev Rudnick, Jesse Thorner for valuable discussions on this topic.
He'd like to thank Jianya Liu and Ze\'ev Rudnick for constant encouragement and Stephen Lester for sending him Iwaniec's note.
He also wants to thank the referees for their very helpful comments and suggestions.


\end{document}